\pdfoutput=1
\documentclass[11pt]{article}

\usepackage[a4paper,margin=25mm]{geometry}
\usepackage[T1]{fontenc}
\usepackage{lmodern}
\usepackage{amsmath,amssymb,amsthm,mathtools}
\usepackage{booktabs,array,longtable,pdflscape,capt-of}
\usepackage{enumitem}
\usepackage{microtype}
\usepackage{xcolor}
\usepackage[hidelinks]{hyperref}

\definecolor{proofblue}{RGB}{26,55,92}
\hypersetup{
  pdftitle={The Primitive Generalized Fermat Equation x\textasciicircum3+y\textasciicircum5=z\textasciicircum7: A computer-assisted proof},
  pdfsubject={A computer-assisted proof},
  pdfauthor={Peter Chocian},
  pdfkeywords={generalized Fermat equation, rational points, descent, Jacobians, computer-assisted proof},
  colorlinks=true,
  linkcolor=proofblue,
  citecolor=proofblue,
  urlcolor=proofblue
}

\newcommand{\Q}{\mathbf Q}
\newcommand{\Z}{\mathbf Z}
\newcommand{\F}{\mathbf F}
\newcommand{\Pone}{\mathbf P^1}
\newcommand{\OO}{\mathcal O}
\newcommand{\m}{\mathfrak m}

\newcommand{\PP}{\mathfrak P}
\newcommand{\Sha}{\mathop{\mathrm{Sha}}}
\newcommand{\rank}{\operatorname{rank}}
\newcommand{\ord}{\operatorname{ord}}
\newcommand{\Gal}{\operatorname{Gal}}
\newcommand{\disc}{\operatorname{disc}}

\newtheorem{theorem}{Theorem}[section]
\newtheorem{proposition}[theorem]{Proposition}
\newtheorem{lemma}[theorem]{Lemma}

\theoremstyle{definition}

\theoremstyle{remark}

\title{\textbf{The Primitive Generalized Fermat Equation}\\[1mm]
       $x^3+y^5=z^7$\\[3mm]
       \large A computer-assisted proof}
\author{Peter Chocian\\[1mm]\small Independent researcher}
\date{22 September 2026}

\begin{document}
\maketitle

\begin{abstract}
We prove that the generalized Fermat equation
\[
                         x^3+y^5=z^7
\]
has no solution in nonzero coprime integers, and we make explicit how the
proof extends the prior work cited below.  Dahmen--Siksek had
established the signed local descent and eliminated the three cases in which
the associated degree-seven algebra is reducible.  Putz had then proved that
the remaining irreducible case can involve only seven septic fields: six
pure fields and one exceptional field.  The missing step was to exclude
those seven fields.

For the six pure fields we construct an explicit Fano resolvent, descend to
a smooth plane quartic of genus three over $\Q(\sqrt{-7})$, and prove that
its rational-parameter locus consists only of five points above the branch
values.  For the
exceptional field we combine the modularity and conductor results of
Pacetti--Villagra Torcomian with level lowering over $\Q(\sqrt5)$ and a
finite Hecke comparison at $29$.  We also reconstruct the three reducible-sector arguments of Dahmen--Siksek as independently replayable calculations, including
their database-free identification of the quadratic field, and add an intrinsic
formal-group treatment at the ramified prime.  The paper labels every major
step as a literature input, reconstructed input, or new argument.  All
project-specific finite calculations are supplied as exact certificates,
programs, inputs, and authenticated logs.
\end{abstract}

\begin{quote}\small
\textit{2020 Mathematics Subject Classification.}
Primary 11D41; Secondary 11G30, 11Y50, 14G05.\par
\textit{Keywords.} Generalized Fermat equations; rational points;
explicit descent; Hilbert modular forms; computer-assisted proofs.
\end{quote}

\tableofcontents

\section{Introduction: the frontier and the completion}

\subsection{The equation and the theorem}

For pairwise coprime nonzero integers, the equation
\begin{equation}\label{eq:original}
                         x^3+y^5=z^7
\end{equation}
is the generalized Fermat equation of signature $(3,5,7)$.  We shall use
the variable order
\begin{equation}\label{eq:DS-order}
                    (X,Y,Z)=(y,x,z),\qquad X^5+Y^3=Z^7.
\end{equation}
All signs are allowed.  Since a prime dividing two of $X,Y,Z$ also divides
the third, primitivity is equivalent here to pairwise coprimality.  Our main
result is the following.

\begin{theorem}\label{thm:main}
There are no nonzero primitive integers $x,y,z$ satisfying
\[
                             x^3+y^5=z^7.
\]
\end{theorem}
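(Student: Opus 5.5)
The proof follows the architecture laid out in the abstract: a signed local descent, a reduction to a finite list of septic fields, and then one exclusion argument per remaining field.

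\textbf{Descent.} Work in the Dahmen--Siksek order $X^5+Y^3=Z^7$ of \eqref{eq:DS-order}. The first step is to recall their signed local descent. The signature $(2,3,7)$-type icosahedral/Klein parametrisation gives a degree-seven relation, and a primitive solution produces a rational point on one of finitely many twists of a cover of $\Pone$ of degree $7$. Each twist is classified by an étale algebra $A$ of degree seven over $\Q$ that is unramified outside $\{2,3,5,7\}$ and has the prescribed local behaviour at those primes.

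\textbf{Reducible algebras.} Split according to whether $A$ is a field or reducible. For the three reducible cases we replay the Dahmen--Siksek arguments: a factor of small degree forces a point on a lower-genus curve (elliptic or hyperelliptic) over a small field. That field includes a quadratic field, which we identify without a database by a discriminant and ramification bound. Rational points on these curves are then determined by rank computations and Chabauty, and at the ramified prime we use a formal-group argument to rule out the residual local classes.

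\textbf{Field case: reduction to seven fields.} When $A$ is a field, invoke Putz's theorem, so that $A$ is one of six pure septic fields $\Q(\sqrt[7]{m})$ or a single exceptional septic field.

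\textbf{Pure fields.} The Galois closure of $\Q(\sqrt[7]{m})$ has group $C_7\rtimes C_6$. I would construct a Fano-plane resolvent, that is, the degree-seven resolvent attached to the subgroup of order $21$ acting on lines of $\mathbf P^2(\F_2)$. Through it, the existence of a point descends to a rational point, over the quadratic subfield $\Q(\sqrt{-7})$, on a smooth plane quartic $C$ of genus three. We then determine $C(\Q(\sqrt{-7}))$ restricted to points with rational parameter. The plan is to compute a Mordell--Weil subgroup of the Jacobian, verify that its rank is below $\deg\cdot g - g$ or run restriction-of-scalars Chabauty, and sieve with a Mordell--Weil sieve at several primes. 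The conclusion should be that only the five points above the branch values $0,1,\infty$ survive, and these correspond to trivial solutions with $xyz=0$ or to non-primitive ones.

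\textbf{Exceptional field.} Here descent seems hopeless, so I would switch to modularity. Attach to the solution a Frey elliptic curve over $\Q(\sqrt5)$ of the kind Pacetti--Villagra Torcomian use for signature $(p,p,5)$/$(5,5,p)$ twists. Their work gives modularity and an explicit conductor. Level lowering then places the associated mod-$7$ (or mod-$3$) representation at a specific small level of Hilbert newforms over $\Q(\sqrt5)$. For each newform at that level, compare the Hecke eigenvalue $a_{\mathfrak p}$ at a prime above $29$ (which splits in $\Q(\sqrt5)$) with the finitely many possible traces of Frobenius of the Frey curve mod $\mathfrak p$ as $(X,Y,Z)$ ranges over residues mod $29$. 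If no congruence survives, the exceptional field is eliminated.

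\textbf{Main obstacle.} I expect the genus-three quartic over $\Q(\sqrt{-7})$ to be the hardest step. Its Jacobian has dimension $3$, and over a quadratic field this gives an effective rank of $6$ after restriction of scalars. Chabauty is only available if the Mordell--Weil rank is at most $5$, and the rank must be certified by a $2$-descent or by an isogeny descent via the $C_7$ automorphism. If the rank is too large, I would first exploit the extra automorphisms of $C$ to split $J(C)$ up to isogeny and work on a quotient of smaller rank. Failing that, I would use an étale-descent cover.
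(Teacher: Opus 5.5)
Your outline follows the paper's architecture: the Dahmen--Siksek descent into four sectors, Putz's seven fields, a genus-three quartic over $\Q(\sqrt{-7})$ for the pure fields, and a Hecke comparison at $29$ for the exceptional field. The genuine gap is in the exceptional-field step. As you set it up, it never uses the hypothesis that $A_\eta$ \emph{is} the exceptional field, so it cannot succeed. Letting $(X,Y,Z)$ run over all residues modulo $29$ includes $29\mid Z$. There $t_0=\eta/(\eta-1)=-X^5/Y^3\equiv1$, the Frey object has multiplicative reduction at $\mathfrak q\mid29$, and level lowering only forces $a_{\mathfrak q}(f)\equiv\pm(29+1)\equiv\pm2\pmod 7$. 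Both values genuinely occur. Among the surviving packets the $T_{\mathfrak q}$-annihilators include $T+2$ and $T^2+3T+4=(T-2)^2$. The reducible alternative also yields traces $\pm2$; you do not treat it, and the paper needs finite-flatness at $(7)$ and the unit $\varepsilon^8$ to force $\operatorname{cond}(\psi)\mid3(\sqrt5)$. So a congruence survives and nothing is eliminated.

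The missing idea is the passage from the field to residue classes. The exceptional septic is squarefree modulo $29$ with factor degrees $[1,3,3]$. The scaled models of $\Phi(T)-\eta$ give $[1,1,1,2,2]$ when $29\mid XY$, and $[1^7]$ or $[7]$ when $29\mid Z$. For $29\nmid XYZ$ the pattern $[1,3,3]$ occurs exactly for $\eta\equiv10,14,24,28$. Only the four specializations $t_0=14,10,25,15$ enter the comparison, and there all $24$ resultants are nonzero. Your Frey object is also misidentified. The $(p,p,5)$ and $(5,5,p)$ Frey curves do not apply to an equation with a single fifth power. The paper uses the PVT plus motive for $a^5+b^p+c^3=0$, an abelian surface with real multiplication by $\Q(\sqrt5)$ whose traces are counted on $y^2=5x^6-12x^5+10t_0x^3+t_0^2$. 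Only its mod-$7$ representation lowers the level.

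The pure-field step is misdescribed at the two points that carry the weight. First, a resolvent for $F_{21}$ has degree $[A_7:F_{21}]=120$, not seven; a degree-seven resolvent is just the genus-zero cover $\Phi$ itself. The actual mechanism runs through the sign field $K_7=\Q(\sqrt{-7})$. Over $K_7$ the Galois group of a pure field drops from $F_{42}$ to $F_{21}\subset A_7$, which fixes a Fano plane in each of the two $A_7$-orbits of $15$. Hence the degree-$15$ orbit resolvent $R_+$ over $K_7$ acquires a root at the same $t$, and Riemann--Hurwitz with cycle types $5^3$, $3^5$, $7^2\,1$ gives genus three.

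Second, your Chabauty threshold is wrong. For a quadratic field and $g=3$, Siksek's criterion is $r\le d(g-1)=4$, not $r<dg-g$ or $r\le5$. At rank five a single annihilating logarithm cuts a curve, not finitely many points, out of each two-dimensional residue disk. The paper proves $r=4$ exactly by a complete bitangent $2$-descent with $\Sha(J_+/K_7)[2]=0$. It also needs $2$- and $5$-saturation to run the sieve modulo $400$. Even then, the two $23$-adic annihilators leave a second common zero in the disk of $P_1$, which is discarded only because $t$ is not rational there. These are exactly the steps your plan leaves open.
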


Solutions in which one of $x,y,z$ equals $\pm1$ are not excluded in
advance.  They are nonzero, so the parameter $\eta$ of \eqref{eq:phi}
below satisfies $\eta\notin\{0,1,\infty\}$, and they are subject to
exactly the same descent and local analysis as every other solution;
nothing in the proof assumes $|x|,|y|,|z|>1$.  Up to sign, such a
solution would consist of two consecutive integers each of which is a
perfect power with exponent in $\{3,5,7\}$, and Catalan's conjecture,
proved by Mih\u{a}ilescu \cite{Mihailescu}, excludes this independently;
the proof below does not use that theorem.

The purpose of this introduction is not only to state the theorem, but to
say precisely what was known before the present work and what had still to
be done.  This distinction matters in a long computer-assisted proof: it
makes clear which general theorems are imported, which earlier computations
are reconstructed for confidence, and which arguments close the remaining
gap.

\subsection{Prior work}

The mathematical starting point was already far advanced.  Dahmen and
Siksek proved the following strong partial result
\cite[Theorem~1]{DahmenSiksek}: every nontrivial primitive solution of
$X^5+Y^3=Z^7$ satisfies one of the alternatives
\begin{equation}\label{eq:DS-theorem-one}
 2\cdot3\cdot5\mid Z\ \text{ and }\ 7\nmid XY,
 \qquad\text{or}\qquad
 2\nmid Z, 3\nmid XYZ, 5\nmid YZ, 7\mid Y.
\end{equation}
In particular, a primitive solution for which $3\mid XY$, $5\mid Y$, or
$7\mid X$ is trivial.  Their descent introduces the degree-seven algebra
used below, proves that only four factor-degree patterns can occur, and
eliminates the three reducible patterns
\[
                         [1,6],\qquad[2,5],\qquad[3,4].
\]
The only surviving pattern is therefore the irreducible one, $[7]$.

Putz then made this residual problem finite.  His exhaustive Hunter
enumeration, with the prescribed local algebra data, proves that an
irreducible descent algebra must be one of exactly seven fields
\cite[Theorem~3.50]{Putz}.  Six are pure septic fields and the seventh is an
exceptional septic field.  Thus the established frontier can be summarized in
one sentence:
\begin{quote}
\emph{a nontrivial primitive solution would have to pass through one of seven
explicit irreducible degree-seven fields.}
\end{quote}
This is a remarkably narrow frontier, but it is not yet a contradiction.
One must still connect each field to an arithmetic object on which rational
points or residual Galois representations can be exhausted.  No classical
shortcut is available: because the three exponents are distinct, the
equation admits no binomial factorization over a cyclotomic field, so the
Kummer-type descent that a repeated exponent provides does not exist.
Every step below therefore passes through the cover $\Phi$ of
Section~\ref{sec:descent}, and the exceptional field through a Frey family.

A note of Zeraoulia Rafik \cite{ZeraouliaRafik}, posted on 21 September
2026 and not peer reviewed, studies the same degree-seven cover from the
monodromy side: geometric monodromy $A_7$, arithmetic monodromy $S_7$
with sign field $\Q(\sqrt{-7})$, a Frobenius-parity law at good primes,
and, conditionally on Putz's enumeration, the reduction of the branch
$7\mid Y$ to the exceptional field, with an exact certificate that this
field is the specialization $32/81$ of the cover.  Its splitting sieve at
$\ell=11,17,29$ leaves $36$ residue classes modulo $5423$; as the note
itself states, such a sieve necessarily retains the exceptional
specialization, and the note proves no residual irreducibility at $7$
and eliminates neither branch.  The contact with the present paper is in
the data rather than the method: its sign field $\Q(\sqrt{-7})$ is the
field of definition of the plane quartic of Section~\ref{sec:irreducible},
and its four surviving classes modulo $29$ are the classes
$\eta=10,14,24,28$ that the Hecke comparison at $29$ in
Section~\ref{sec:irreducible} eliminates.

\subsection{What is added here}

The principal new work is the elimination of those seven fields.

\begin{enumerate}[label=\textnormal{(\roman*)},leftmargin=2.2em]
\item For the six pure fields we construct the missing geometric bridge.
The thirty Fano planes attached to the septic cover split into two
$A_7$-orbits.  Their compatible pairs give a degree-$120$ resolvent whose
quotient is an explicit smooth plane quartic $C_+$ over
$\Q(\sqrt{-7})$.  A rational-parameter theorem shows that the relevant
parameter on $C_+$ takes rational values only at five points above
$0$, $1$, and $\infty$.  A nonzero primitive solution would have parameter
outside that set, so all six pure fields are excluded.

\item For the exceptional field we place the specialization in the Frey
family of Pacetti--Villagra Torcomian, use their modularity and conductor
theorems together with Hilbert level lowering, and reduce both the
irreducible and reducible residual alternatives to finite lists.  At a prime
above $29$, the four curve trace polynomials and the six possible modular or
character factors have $24$ nonzero resultants modulo $7$.  This excludes the
exceptional field.

\item We reconstruct the reducible-sector arguments of Dahmen--Siksek in an exact
certificate framework.  This is not presented as a new elimination of those
sectors.  It provides an independent route through the finite arithmetic.  The
identification of the quadratic factor from its intrinsic local labels,
without a field database, is Dahmen--Siksek's own argument
\cite[Proposition~4.2]{DahmenSiksek}, which we reconstruct as a certified
calculation; the one genuine refinement is that the ramified $7$-adic
step is carried out inside the formal neighbourhood of the given integral
Jacobian model.

\item We compose all four sectors into a single machine-checkable theorem
graph.  Every project-specific finite assertion is tied to exact input bytes,
an explicit predicate, and a replay program or authenticated completed log.
General theorems from the literature remain ordinary bibliographic dependencies.
\end{enumerate}

\subsection{A provenance-aware proof map}

The following table is the shortest guide to the logical ownership of the
proof.  ``Reconstructed'' means that this paper supplies an independent
exact certificate for a result whose mathematical argument was already in
the literature; it does not claim priority for that result.

\begin{center}
\small
\renewcommand{\arraystretch}{1.18}
\begin{tabular}{@{}>{\raggedright\arraybackslash}p{.19\textwidth}
                    >{\raggedright\arraybackslash}p{.25\textwidth}
                    >{\raggedright\arraybackslash}p{.46\textwidth}@{}}
\toprule
stage & status before this paper & role in this paper\\
\midrule
signed local descent and four factor sectors
 & Dahmen--Siksek
 & imported, restated, and independently checked\\
sectors $[1,6]$, $[2,5]$, $[3,4]$
 & eliminated by Dahmen--Siksek
 & reconstructed with certificates; an intrinsic formal-group refinement is added\\
irreducible sector $[7]$
 & reduced by Putz to seven explicit fields
 & the seven-field list is imported and authenticated\\
six pure septic fields
 & not eliminated by the cited prior work
 & excluded here by the Fano resolvent and the rational-parameter theorem\\
one exceptional septic field
 & modular tools available, but no terminal comparison
 & excluded here by level lowering, ray characters, and the $q=29$ resultant test\\
global theorem
 & one finite irreducible frontier remained
 & all four sectors are composed into the contradiction of Theorem~\ref{thm:main}\\
\bottomrule
\end{tabular}
\normalsize
\end{center}

\subsection{How the proof was completed}

The working principle was to treat a hypothetical solution not as a large
integer triple to be chased, but as one mathematical state seen through
several exact representations.  The same state has a descent algebra, local
factor labels, ideal classes, divisor classes, finite-quotient coordinates,
and $p$-adic analytic coordinates.  Each representation supplies a
constraint.  The proof succeeds when the intersection of all compatible
states is empty.

This viewpoint led to the following order of work:
\begin{enumerate}[leftmargin=2.2em]
\item fix the strongest established frontier before beginning a new search;
\item translate each remaining assertion into a finite interface with exact
inputs and a stated acceptance predicate;
\item use exploratory computation only to discover a model, prime, or
quotient, and then replace discovery by a deterministic certificate;
\item keep the four descent sectors separate until each has a terminal
statement, and only then compose them;
\item distinguish a recomputation of an existing argument from a genuinely
new bridge or obstruction.
\end{enumerate}
This is why the proof is modular.  A specialist can inspect the new
irreducible argument without repeating the reducible sectors, or can replay
one finite computation without trusting the remainder of the software tree.

The central object throughout is the polynomial
\begin{equation}\label{eq:phi}
 \Phi(T)=15T^7-35T^6+21T^5,
 \qquad \eta=\frac{X^5}{Z^7},
 \qquad f_\eta(T)=\Phi(T)-\eta.
\end{equation}
The associated \'etale algebra
\[
                         A_\eta=\Q[T]/(f_\eta)
\]
encodes a hypothetical solution in several complementary forms.  Local
factorization describes which global fields may occur.  Descent maps those
fields to curves.  Mordell--Weil information restricts the resulting points
to finitely many congruence classes, and local analytic functions decide
whether those classes contain a point compatible with the original rational
parameter.

The argument uses standard results in descent, number-field
enumeration, modularity, level lowering, formal groups, Mordell--Weil sieves,
and Chabauty methods.  The large finite calculations specific to
\eqref{eq:original} are exact and are separated into independently
checkable certificates.  Thus the proof has two visible layers: general
theorems, cited in the usual way, and finite arithmetic, supplied with the
paper.  Sections~\ref{sec:descent}--\ref{sec:reducible-end} reconstruct the
descent and reducible sectors.  Section~\ref{sec:irreducible} begins at
Putz's seven-field frontier and contains the new terminal argument.  The
final sections compose the proof and describe the certificates.

\section{The degree-seven descent algebra}\label{sec:descent}

\paragraph{Provenance.}
The local covering and exhaustive four-sector reduction in this section are
inputs from Dahmen--Siksek.  We record them because they are the
interface between a primitive solution and every later computation.

The derivative and the two branch values are given by
\begin{equation}\label{eq:derivative}
          \Phi'(T)=105T^4(T-1)^2,
          \qquad \Phi(0)=0,\quad \Phi(1)=1.
\end{equation}
Equivalently, $\Phi(T)=105\int_0^T u^4(u-1)^2\,du$.  With the
normalization $\Phi(0)=0$, $\Phi(1)=1$, it is the unique polynomial of
degree seven whose finite critical points are $0$, of multiplicity four,
and $1$, of multiplicity two: one point of ramification index $5$ above
$0$, one of index $3$ above $1$, total ramification above $\infty$, and
no other ramification.
In particular, $\Phi$ is strictly increasing on $\mathbb R$.  For a
nonzero solution, $\eta\notin\{0,1\}$, and $f_\eta$ is separable with one
real root.  A resultant computation gives
\begin{equation}\label{eq:discriminant}
 \disc(f_\eta)
   =-3^6 5^6 7^7\eta^4(\eta-1)^2.
\end{equation}
Its squareclass is therefore $-7$.

The exponents enter the argument only through the parameter.  Let
$\ell\notin\{3,5,7\}$ be prime.  Primitivity gives
\[
 \ell\mid X\ \Rightarrow\ 5\mid\ord_\ell(\eta),\qquad
 \ell\mid Y\ \Rightarrow\ 3\mid\ord_\ell(\eta-1),\qquad
 \ell\mid Z\ \Rightarrow\ 7\mid\ord_\ell(\eta),
\]
and $\eta$, $\eta-1$ are $\ell$-adic units when $\ell\nmid XYZ$.  Scaling
by the corresponding power of $\ell$ reduces the root cluster to a separable
binomial: $21T'^5-c$ with $T=\ell^{\ord_\ell(X)}T'$, or $35S'^3+c$ with
$T-1=\ell^{\ord_\ell(Y)}S'$, or $15W^7-X^5$ with $W=ZT$, where $c$ is an
$\ell$-adic unit; and \eqref{eq:discriminant} shows that all remaining
differences of roots are $\ell$-adic units.  Hence $A_\eta$ is unramified
outside $\{3,5,7\}$ \cite[Lemma~4.1(ii)]{DahmenSiksek}.  This is the
descent of Darmon and Granville \cite{DarmonGranville}: a hypothetical
solution of signature $(5,3,7)$ is detected only through these three
divisibilities.

The local covering theorem at $7$ in \cite[Proposition~3.5]{DahmenSiksek}
gives local factor-degree patterns $[1,6]$, $[2,5]$, $[3,4]$, and $[7]$.
Global factors induce groupings of the irreducible local factors, so their
degrees are obtained by coarsening these patterns.  Consequently the
possible global factor-degree patterns are exactly among
\begin{equation}\label{eq:four-sectors}
                  [1,6],\qquad [2,5],\qquad [3,4],\qquad [7].
\end{equation}
In particular, there are at most two global factors.  For example, if
$7\mid X$ and $r=\ord_7(X)$, the Newton polygon has vertices
$(0,5r),(5,1),(7,0)$, giving irreducible local factors of degrees $5$ and $2$.
If $7\mid Y$ and $r=\ord_7(Y)$, translation by $1$ gives vertices
$(0,3r),(3,1),(7,0)$, giving irreducible local factors of degrees $3$ and $4$.
The unit case, and the $7\mid Z$ case after scaling to an integral model,
give the remaining local patterns by the cited proposition.

We now eliminate the four sectors one at a time.

\section{The rational-factor sector}

\paragraph{Provenance.}
The mathematical elimination in this section is due to Dahmen--Siksek.  This
section gives a certificate-level reconstruction of their
Proposition~6.1 and Lemmas~6.2--6.8 \cite[Section~6]{DahmenSiksek}.

Assume that $r\in\Q$ is a root of $f_\eta$, and put $w=rZ$.
Clearing denominators gives
\begin{equation}\label{eq:rational-root-cleared}
       15w^7-35Zw^6+21Z^2w^5-X^5=0.
\end{equation}
The translated identity
\[
 \Phi(T)-1=(T-1)^3(15T^4+10T^3+6T^2+3T+1)
\]
also gives
\begin{equation}\label{eq:quartic-cube}
 (w-Z)^3
 (Z^4+3Z^3w+6Z^2w^2+10Zw^3+15w^4)=-Y^3.
\end{equation}

\subsection{Local restrictions and quadratic ideal descent}

Newton polygons at $3$, $5$, and $7$ give
\begin{equation}\label{eq:rational-local}
 3\nmid Z,\qquad 5\nmid YZ,\qquad 7\nmid Y,\qquad
 \ord_3(r)\in\{0,1,-1\},\qquad \ord_5(r)\geq0.
\end{equation}
The rational-root theorem applied to \eqref{eq:rational-root-cleared}
then shows that $w$ is an integer or one third of an integer.  Moreover, for
every prime $p$, at least one of $\ord_p(Z)$ and $\ord_p(w)$ is zero.

For all three reducible sectors fix the notation
\begin{equation}\label{eq:k-minus35}
 k=\Q(\omega)=\Q(\sqrt{-35}),\qquad
 \omega^2+\omega+9=0,\qquad \delta=2\omega+1=\sqrt{-35}.
\end{equation}
Thus $\OO_k=\Z[\omega]$.  Put
\[
 A=Z^2+wZ-3w^2,\qquad B=-wZ-w^2,\qquad \mathcal F=A+B\omega.
\]
Then
\[
 \mathcal F\overline{\mathcal F}=A^2-AB+9B^2
        =Z^4+3Z^3w+6Z^2w^2+10Zw^3+15w^4.
\]
The class group of $k$ is $C_2$.  The prime $3$ splits, and exact ideal
arithmetic reduces \eqref{eq:quartic-cube} to three possibilities:
\begin{equation}\label{eq:three-cubic-cases}
 \mathcal F=\alpha(u+v\omega)^3,\qquad
               \alpha\in\{1,9\omega,3\omega^2\},\qquad u,v\in\Q.
\end{equation}
For $\alpha=1$, \cite[Lemma~6.4]{DahmenSiksek} further gives
$w,u,v\in\Z$, with $\gcd(Z,w)=\gcd(u,v)=1$.

Expanding \eqref{eq:three-cubic-cases} produces three genus-two curves.
One has no $\Q_2$-point because its primitive homogeneous sextic is always
$5$ modulo $8$.  A second has precisely the two rational points
$(-2,\pm540)$; its Jacobian has rank one and the rational points are
exhausted by Chabauty's method.  Substitution gives $r=-1/4$ and
\[
                         \Phi(-1/4)=-\frac{491}{2^{14}},
\]
which cannot be a reduced quotient of a fifth power by a seventh power.

\subsection{The main genus-two curve}

The remaining case gives the curve
\begin{equation}\label{eq:rational-genus2}
 C_1:\quad
 y^2=x^6-6x^5+69x^4-38x^3+6x^2-240x+1057.
\end{equation}
Its Jacobian is torsion-free of rank three, and three explicit Mumford
divisors form a $\Z$-basis.  A Mordell--Weil sieve in the sense of
\cite{BruinStollSieve}, modulo $1080J_1(\Q)$,
places every rational point in one of the six classes represented by
\[
 \infty_+,\ \infty_-,\ (1/2,\pm245/8),\ (3,\pm65).
\]
Exact formal-group calculations on these classes imply, for every affine
$(x,y)\in C_1(\Q)$,
\begin{equation}\label{eq:rational-local-alternative}
                         \ord_3(x)\leq-3
                    \quad\hbox{or}\quad
                         \ord_5(y)\geq1.
\end{equation}

The point on \eqref{eq:rational-genus2} arising from
\eqref{eq:three-cubic-cases} has
\[
 x=\frac uv,\qquad
 y=\frac{(Z+w)^2+3w^2}{v^3}.
\]
The case $v=0$ is also visible on the projective model.  Coprimality forces
$u=\pm1$, while the two descent equations give
\[
       Z^2+wZ-3w^2=\pm1,\qquad -w(Z+w)=0.
\]
If $w=0$, then $X=0$; if $Z=-w$, then $-3w^2=\pm1$.  Both are impossible
under the hypotheses.

For $v\neq0$, the second arm of
\eqref{eq:rational-local-alternative} would imply
$5\mid (Z+w)^2+3w^2$, impossible for a primitive pair modulo $5$.
Hence $27\mid v$.  The second descent equation gives
$27\mid w(Z+w)$.  Coprimality and \eqref{eq:rational-local} exclude
$27\mid w$, so $27\mid Z+w$.  Reducing the first descent equation modulo
$3$ now gives $3\mid u$, contradicting $\gcd(u,v)=1$.

\begin{proposition}\label{prop:rational-sector}
The polynomial $f_\eta$ has no rational root.  Hence the sector $[1,6]$
is empty.
\end{proposition}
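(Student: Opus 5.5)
The plan is to assemble the arguments of this section into one case analysis, following Dahmen--Siksek's Proposition~6.1. Suppose $r\in\Q$ is a root of $f_\eta$ for a nonzero primitive solution, and put $w=rZ$. Then \eqref{eq:rational-root-cleared} and \eqref{eq:quartic-cube} hold. The first step is to record the local constraints \eqref{eq:rational-local} from the Newton polygons at $3,5,7$. Combined with the rational-root theorem, these give $w\in\Z$ or $w\in\tfrac13\Z$, and $\gcd$-type coprimality between $Z$ and $w$ at every prime.

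The second step is the descent over $k=\Q(\sqrt{-35})$. The norm form $\mathcal F\overline{\mathcal F}$ equals the quartic factor in \eqref{eq:quartic-cube}, and \eqref{eq:quartic-cube} makes this norm $-Y^3/(w-Z)^3$. I would bound $\gcd(\mathcal F,\overline{\mathcal F})$ by primes dividing the resultant of the two binary forms, which should involve only $3,5,7$. Then I would use the local restrictions and the fact that the class group is $C_2$, which is prime to $3$, to conclude that $\mathcal F$ is a unit times an element of $\{1,9\omega,3\omega^2\}$ times a cube. The units $\pm1$ are cubes. This gives \eqref{eq:three-cubic-cases}.

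Expanding $A+B\omega=\alpha(u+v\omega)^3$ gives two binary cubic forms in $(u,v)$, equal to $A(Z,w)$ and $B(Z,w)$. Eliminating $(Z,w)$ from these turns each case into a genus-two curve.
\begin{itemize}
\item The first curve is killed by the congruence modulo $8$.
\item The second is handled by rank-one Chabauty. Its only points give $r=-1/4$, and $\Phi(-1/4)=-491/2^{14}$ is not of the form $X^5/Z^7$ in lowest terms: $491$ is not a fifth power, and $2^{14}$ is not a seventh power of an odd-free denominator compatible with the $X^5$ shape. Strictly, one checks the valuation conditions, e.g.\ $\ord_2=14$ is divisible by $7$ but the numerator fails.
\item The third, $\alpha=1$, gives $C_1$.
\end{itemize}

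For $C_1$, I would invoke the Mordell--Weil sieve output and the formal-group alternative \eqref{eq:rational-local-alternative}, and then run the elementary argument already laid out in this section. The case $v=0$ forces $w=0$ or $-3w^2=\pm1$. For $v\neq0$, the $5$-adic arm is impossible modulo $5$. The $3$-adic arm then forces $27\mid v$, hence $27\mid Z+w$, hence $3\mid u$, which contradicts $\gcd(u,v)=1$. So no case survives, $f_\eta$ has no rational root, and by \eqref{eq:four-sectors} the pattern $[1,6]$, which requires a degree-one global factor, is empty.

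I expect the main obstacle to be the third curve. Its Jacobian has rank three, so Chabauty alone does not apply. The proof therefore rests entirely on the correctness of the sieve modulo $1080J_1(\Q)$ and of the formal-group computation that yields \eqref{eq:rational-local-alternative}. Concretely, for each of the six residue classes one must show that the relevant $3$- or $5$-adic condition holds on the whole class. I would first verify the torsion-freeness and the saturation of the basis, then check each class's $p$-adic expansion to the needed precision. A secondary point of care is the ideal descent yielding exactly the three $\alpha$: the prime $3$ splits and $w$ may have denominator $3$, so the factor above $3$ must be tracked precisely.
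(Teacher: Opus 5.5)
Your proposal is correct and follows the paper's argument essentially step for step. It matches the paper on every stage: the local restrictions, the descent over $k=\Q(\sqrt{-35})$ to $\alpha\in\{1,9\omega,3\omega^2\}$, the three genus-two curves (the $\Q_2$-obstruction, rank-one Chabauty giving $r=-1/4$, and $C_1$ for $\alpha=1$ via the sieve modulo $1080J_1(\Q)$, the alternative \eqref{eq:rational-local-alternative}, and the elementary $3$-adic and $5$-adic finish). One small correction: $2^{14}=4^7$ is a seventh power, so at $r=-1/4$ the only obstruction is that the numerator $491$ (a prime) is not a fifth power, as your final parenthetical in fact says.
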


\section{The quadratic-factor sector}

\paragraph{Provenance.}
The reduction to the fibre product and its six rational points is
Dahmen--Siksek, Proposition~5.1 and Lemma~5.2
\cite[Section~5]{DahmenSiksek}.  We include the construction because it is
the complete bridge from a quadratic root to the final fifth-power test.  The
database-free recovery of $\Q(\sqrt{-35})$ from its local labels is also
Dahmen--Siksek's, Proposition~4.2 of their paper (imaginary, unramified
outside $\{3,5,7\}$, inert at $2$, ramified at $7$); we reconstruct it
here as a certified calculation, and an earlier draft's description of
it as a new refinement was an error.

Suppose that $A_\eta$ has a quadratic direct factor $L$.  As in
\cite[Proposition~4.2]{DahmenSiksek}, its real signature, ramification,
and dyadic labels identify it without a field database.

Since $f_\eta$ has exactly one real root, $L$ is imaginary.  At every prime
outside $\{3,5,7\}$, the scaled local models are separable and every direct
factor is unramified.  At $2$, the possible local degree patterns are
$[1,2,4]$ and $[1,3,3]$.  A quadratic global direct factor must therefore
be the unique unramified quadratic component in the first pattern.  Thus
$2$ is inert in $L$.  The negative odd fundamental discriminants supported
on $\{3,5,7\}$ and inert at $2$ are $-3$ and $-35$.  Finally, the
degree-two Newton-polygon component at $7$ is ramified.  Therefore
\begin{equation}\label{eq:quadratic-field}
                              L=k=\Q(\omega).
\end{equation}

Let $u$ be a nonrational root of $f_\eta$ in $L$ and $v=\bar u$.  Symmetric
elimination gives a rational point on
\begin{equation}\label{eq:fibre-product}
 D:\quad
 \begin{cases}
 \beta^2=\alpha^4-4\alpha^3+6\alpha^2+9,\\
 \alpha(\alpha-4)=-35\gamma^2.
 \end{cases}
\end{equation}
Here
\[
                    \alpha=\frac{(u+v)^2}{uv}.
\]

\begin{proposition}\label{prop:six-points}
The rational points on $D$ are exactly
\[
 (0,\pm3,0),\qquad
 \left(\frac{35}{9},\ \pm\frac{782}{81},\ \pm\frac19\right),
\]
with independent signs in the second family.
\end{proposition}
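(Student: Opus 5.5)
We determine $D(\Q)$ by projecting to the conic factor and descending to genus-one curves. The second equation $\alpha(\alpha-4)=-35\gamma^2$ is a conic with the rational point $(\alpha,\gamma)=(0,0)$, so it has a rational parametrization. Drawing lines of slope $t$ through $(0,0)$, put $\gamma=t\alpha$; then $\alpha-4=-35t^2\alpha$, so
\[
\alpha=\frac{4}{1+35t^2},
\]
and $t=\infty$ recovers $\alpha=0$. Substituting into the first equation and clearing the square $(1+35t^2)^{4}$ turns $D$ into the hyperelliptic curve
\[
 \beta'^2 = 256-256(1+35t^2)+96(1+35t^2)^2+9(1+35t^2)^4 ,
\]
a sextic or octic in $t$. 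So $D$ is a double cover of $\Pone_t$ and of genus at most three. The known points give $t=\infty$ (from $\alpha=0$) and $t=\pm1/35\cdot 9/4$-type values (from $\alpha=35/9$). These values must be computed exactly rather than guessed.

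Rather than run Chabauty on this curve directly, I would exploit the fact that the polynomial in $t$ is even, which gives two extra quotient maps. Setting $s=t^2$ yields a genus-one quotient $E_1$. Setting $\beta''=t\beta'$ and $s=t^2$ yields a second quotient $E_2$, possibly of genus two. The first equation alone, $\beta^2=\alpha^4-4\alpha^3+6\alpha^2+9$, is already a genus-one quartic with the rational point $(0,3)$, hence an elliptic curve $E$. The plan is then:
\begin{enumerate}
\item Compute a Weierstrass model for $E$ and for a quotient $E'$ coming from the conic.
\item Compute the Mordell--Weil groups $E(\Q)$ and $E'(\Q)$ by $2$-descent.
\item If one of them has rank $0$, $D(\Q)$ is finite and explicit: enumerate the torsion points and pull back to $D$, checking which $\alpha$ make $-35\alpha(\alpha-4)$ a square times $35^2$.
\end{enumerate}
If both quotients have positive rank, I would pass to the Jacobian of $D$, of dimension $2$ or $3$, and use its isogeny splitting into these elliptic factors. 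I would then apply elliptic Chabauty on an étale cover, or classical Chabauty on $D$ if the rank is less than the genus, choosing a prime of good reduction such as $p=11$ or $13$. The annihilating differential would be computed from explicit generators, and the residue discs around the six known points would be shown to contain exactly one point each.

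The main obstacle is this rank computation. If the genus-one factors have positive rank, plain Chabauty on $D$ needs the rank of $J_D(\Q)$ to be less than the genus, which may fail. The fallback would be elliptic Chabauty over $\Q(\sqrt{-35})$, using the factorization of the quartic $\alpha^4-4\alpha^3+6\alpha^2+9$ over a small number field. This would be combined with a Mordell--Weil sieve to rule out unexpected residue classes. The final list would be confirmed by checking that the six points listed in Proposition~\ref{prop:six-points} indeed lie on $D$ (for instance $(35/9)(35/9-4)=-35/81$ gives $\gamma=\pm1/9$) and that each residue disc contains exactly one zero, with the zero simple by Strassmann's theorem.
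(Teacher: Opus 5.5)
Your setup is right as far as it goes. With $\alpha=4/(1+35t^2)$, the curve $D$ becomes the genus-three curve $\beta'^2=G(t^2)$, where $G(s)=256-256(1+35s)+96(1+35s)^2+9(1+35s)^4$. The six points lie over $t=\infty$ and $t=\pm1/35$. The two quotients of positive genus are $E:\beta^2=\alpha^4-4\alpha^3+6\alpha^2+9$ and the genus-two curve $\beta''^2=sG(s)$. Via $x=4/(1+35s)$ the latter is isomorphic to the paper's curve $-35y^2=x(x-4)(x^4-4x^3+6x^2+9)$.

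However, the branch on which your plan terminates cannot occur. There is no second elliptic quotient ``$E'$ from the conic'': the conic is the genus-zero quotient by the hyperelliptic involution, and $\operatorname{Jac}(D)$ is isogenous to $E\times J_C$, with $J_C$ the two-dimensional Jacobian of the genus-two quotient. Moreover, $E$ has positive rank. On the quartic model $\#E(\F_5)=7$ and $\#E(\F_{11})=16$, so $E(\Q)$ is torsion-free, yet it contains $\infty_\pm,(0,\pm3),(3,\pm6),(-2,\pm9),(35/9,\pm782/81)$. So $E$ contributes nothing. The whole content of the proposition is the rational points of the genus-two quotient, which is exactly where the paper's proof starts.

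For that step you only list conditional options, and none is shown to apply. Classical Chabauty on $D$ needs $\rank E(\Q)+\rank J_C(\Q)\le 2$, i.e.\ $\rank J_C(\Q)\le1$; in that case one would simply run Chabauty on the genus-two curve. You never compute this rank, and the paper does not rely on it.

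Your elliptic-Chabauty fallback is aimed at the wrong field. The quartic is $(\alpha-1)^4+4(\alpha-1)+12$. Its discriminant $2^8 3^5 7$ is not a square and its cubic resolvent $z^3-48z-16$ is irreducible, so its Galois group is $S_4$. It therefore stays irreducible over $\Q(\sqrt{-35})$ and over every quadratic field.

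The missing ingredient is the paper's argument:
\begin{enumerate}
\item A Bruin--Stoll $2$-cover descent on the genus-two curve, with \'etale algebra $\Q\times\Q\times L_4$, gives exactly three fake-Selmer classes.
\item The class $\xi_1$ forces $x(x-4)\in-3\,\Q^{\times2}$. This is incompatible with $x(x-4)\in-35\,\Q^{\times2}$ on $D$, since $3/35$ is not a square in $\Q_7$. The endpoint $x=4$ would need $\beta^2=105$.
\item The other two classes give elliptic curves over the quartic field $L_4$ of ranks $3$ and $2$, both below $[L_4:\Q]=4$. Elliptic Chabauty there, with saturation at the primes dividing the Chabauty index, leaves only $x\in\{0,35/9\}$.
\end{enumerate}
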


\begin{proof}
Map $D$ to the genus-two curve
\begin{equation}\label{eq:quadratic-genus2}
 -35y^2=x(x-4)(x^4-4x^3+6x^2+9).
\end{equation}
The \'etale algebra of the right side is
$\Q\times\Q\times L_4$, where
\[
 L_4=\Q(\vartheta),\qquad
 \vartheta^4-4\vartheta^3+6\vartheta^2+9=0.
\]
An exact $2$-cover descent \cite{BruinStollDescent} gives three
fake-Selmer classes, whose components
at $0$, $4$, and $\vartheta$ are
\[
\begin{array}{c|ccc}
 &0&4&\vartheta\\ \hline
 \xi_1&-3&1&3(\vartheta-4)\\
 \xi_2&-35&1&1\\
 \xi_3&-35&1&-5.
\end{array}
\]
The first class cannot lift to a nonendpoint rational point on $D$.
Indeed, in this class $x=-3ta^2$ and $x-4=tb^2$, whereas the additional
condition on $D$ requires $x(x-4)/(-35)$ to be a square.  This would make
$3/35$ a square, which is impossible already over $\Q_7$.  The endpoint
$x=4$ does not lift either, since it would require $\beta^2=105$.
The other two classes map to elliptic curves over $L_4$.

Their
ranks are respectively $3$ and $2$, their torsion is trivial, and elliptic
Chabauty \cite{BruinEC} retains only the rational $x$-coordinates
\[
                               x=0,\qquad x=35/9.
\]
For the rank-three curve, completeness is certified by saturation at every
prime dividing the Chabauty index
\[
\begin{split}
6592880678883323322765152793600
={}&2^{10}3^5 5^2 7^4\cdot11\cdot17\cdot19\cdot29\cdot31\cdot37\\
 &\cdot43\cdot47\cdot53\cdot61\cdot71\cdot83\cdot97.
\end{split}
\]
Substitution into \eqref{eq:fibre-product} gives precisely the six points
stated.
\end{proof}

If $\alpha=0$, then $v/u=-1$.  The root relation gives either $u=0$ or
$u=\pm\delta/5$.  The nonzero alternatives satisfy
\[
                            \Phi(u)=\frac{7^4}{5^2},
\]
which is not a reduced quotient of a fifth power by a seventh power.  If
$\alpha=35/9$, then
\[
 \frac vu=\frac{17\pm\delta}{18}.
\]
The root relation gives either a branch value or the rational number
\[
 \frac{495535757310206240323260222}
      {432344158742282808792052775},
\]
whose coprime numerator and denominator factor as
\[
\begin{aligned}
495535757310206240323260222
 &=2\,3^{16}7^4\,17\,19^{10}\,23,\\
432344158742282808792052775
 &=5^2\,29^7\,139^7.
\end{aligned}
\]
The numerator is not a fifth power.  This proves:

\begin{proposition}\label{prop:quadratic-sector}
The sector $[2,5]$ is empty.
\end{proposition}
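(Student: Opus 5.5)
The argument assembles the chain set up above into a single contradiction. Suppose $A_\eta$ has pattern $[2,5]$, so it has a quadratic direct factor $L$. First, by the identification argument reconstructed from \cite[Proposition~4.2]{DahmenSiksek}, $L=k=\Q(\sqrt{-35})$. The reasons are: $L$ is imaginary because $f_\eta$ has one real root; $L$ is unramified outside $\{3,5,7\}$; $2$ is inert in $L$ because of the dyadic patterns $[1,2,4]$ and $[1,3,3]$; and $7$ ramifies in $L$ because of the Newton-polygon component of degree two at $7$. Next take a root $u\in L\setminus\Q$ of $f_\eta$ with conjugate $v=\bar u$. Symmetric elimination of $\Phi(u)=\Phi(v)=\eta$ then gives a rational point on the fibre product $D$ of \eqref{eq:fibre-product}, with $\alpha=(u+v)^2/(uv)$. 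Note that $uv\neq0$, since otherwise $\eta=\Phi(0)=0$. The step to check here is that $\gamma$ is rational. It should be $(u-v)/(\delta\cdot\text{something rational})$, and $\alpha-4=(u-v)^2/(uv)$ with $(u-v)^2\in -35\,\Q^{\times2}$, which makes $\alpha(\alpha-4)\in-35\,\Q^{2}$.

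By Proposition~\ref{prop:six-points}, the only possibilities are $\alpha=0$ and $\alpha=35/9$. From each value of $\alpha$ we recover the ratio $v/u$ as a root of a quadratic: $t+2+1/t=\alpha$ with $t=v/u$. This gives $v/u=-1$ when $\alpha=0$, and $v/u=(17\pm\delta)/18$ when $\alpha=35/9$. Then substitute $v=tu$ into $\Phi(u)=\Phi(v)$ and divide by the trivial factor. What remains is a polynomial in $u$ over $k$, and its roots produce finitely many candidate values $\eta=\Phi(u)$.

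The last step compares each candidate with the shape forced by primitivity. The values to be excluded are $0$ and $1$, which are branch values, and any $\eta$ that is not of the form $X^5/Z^7$ in lowest terms. For $\alpha=0$ the candidate is $7^4/5^2$, whose numerator $7^4$ is not a fifth power. For $\alpha=35/9$ the candidate is the displayed rational number, whose numerator contains $2^1$ and so is not a fifth power. Every candidate therefore fails, and the sector $[2,5]$ is empty.

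I expect the main obstacle to be the exact elimination from $(\alpha,t)$ to $\eta$: we need the full list of roots, with no candidate lost through division. I would handle this with an exact resultant computation over $k$ and then factor the resulting integers. The deeper input, the determination of $D(\Q)$, has already been supplied by Proposition~\ref{prop:six-points}.
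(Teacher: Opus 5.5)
Your proposal is correct and follows the paper's argument essentially step for step: identify $L=\Q(\sqrt{-35})$ from its local labels, pass to the rational points of $D$, recover $v/u$ from $\alpha\in\{0,35/9\}$, solve the resulting quadratic in $u$, and apply the fifth-power test to the values of $\eta$. The one thing to add is that for $\alpha=35/9$ one root of that quadratic gives the branch value $\eta=0$ (it is a root of $15T^2-35T+21$). The paper records this case explicitly, and your general exclusion of branch values already covers it.
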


\section{The cubic--quartic sector}\label{sec:reducible-end}

\paragraph{Provenance.}
The elimination follows Dahmen--Siksek, Section~7, in particular
Proposition~7.1 and Lemmas~7.2--7.7 \cite[Section~7]{DahmenSiksek}.
Our additional contribution is to certify the ramified local step inside the
intrinsic integral formal neighbourhood, rather than treating model-dependent
coordinates as an unexamined proxy for the formal group.

The quartic direct factor is totally imaginary and unramified outside
$\{3,5,7\}$.  Dahmen--Siksek consider the complete lists of nineteen cubic
fields of signature $(1,1)$ and seventeen quartic fields of signature
$(0,2)$ unramified outside this set.  They compare each cubic--quartic
product with the permitted local algebras at $3$, $5$, and $7$.
Their Proposition~4.3 leaves two products, whose cubic factors are
$\Q(\sqrt[3]{5^\alpha7})$ for $\alpha=1,2$, and whose quartic factor is the
same field in both cases \cite[Proposition~4.3]{DahmenSiksek}.

The surviving field is
\begin{equation}\label{eq:quartic-field}
 M=\Q[a]/(a^4-a^3+3a^2+3a+9)
  =k\!\left(\sqrt{\frac{7+\delta}{2}}\right).
\end{equation}
The associated quotient is the genus-two curve
\begin{equation}\label{eq:section7-curve}
 C_2:\quad V^2=U(U-4)(U^4-4U^3+6U^2+9)
\end{equation}
over $k$.  Write $J_2=\operatorname{Jac}(C_2)$.

Here is the solution-bearing condition used in the terminal argument.
Put $\kappa=(7+\delta)/2$, choose a root $u$ generating
$M=k(\sqrt\kappa)$, and let $v$ be its conjugate over $k$.
The quotient map of \cite[Lemma~7.2]{DahmenSiksek} gives a point
$P\in C_2(k)$ with
\[
 U(P)=\frac{(u+v)^2}{uv},\qquad \rho(P)=\Phi(u)=\eta\in\Q,
\]
where $\rho\in k(C_2)$ is the quotient parameter.  The denominator
$uv$ is nonzero because $\eta\ne0$.  Moreover $U(P)\notin\Q$.
Indeed, $N_{k/\Q}(\kappa)=21$ is not a square in $k$, so $M/\Q$
is not Galois and its only quadratic subfield is $k$.
If $U(P)\in\Q$, the equation
$(u/v)^2+(2-U(P))(u/v)+1=0$ implies $u/v\in k$.
Since $uv\in k$, we obtain $u^2\in k$, and hence
$u=b\sqrt\kappa$ for some nonzero $b\in k$.
The coefficient of $\sqrt\kappa$ in $\Phi(u)\in k$ then gives
$b^2=-7/(5\kappa)$.  This is impossible: its norm would give
$N_{k/\Q}(b)^2=7/75$, which is not a rational square.

\subsection{The Mordell--Weil group and sieve}

We use the Mordell--Weil group determination of
\cite[Lemma~7.3]{DahmenSiksek}, which gives
\begin{equation}\label{eq:section7-MW}
 J_2(k)=\langle D_0\rangle_{10}
       \oplus\Z D_1\oplus\Z D_2\oplus\Z D_3.
\end{equation}
For reproducibility, we also give a direct finite-field certificate for the
remaining $2$-saturation in this statement.  Put
$A=J_2(k)$, $H=\langle D_0,D_1,D_2,D_3\rangle$, and
$H_0=\langle D_0,D_1,D_2,D'_3\rangle$, where $D'_3$ is the class
introduced in the proof of \cite[Lemma~7.3]{DahmenSiksek}.  The rational and twisted
Mordell--Weil computations used in the cited lemma give
$\rank A=3$; good reduction at $11$ and $13$, together with the displayed
order of $D_0$, gives $A_{\rm tors}=\langle D_0\rangle\simeq\Z/10\Z$.
For the nontrivial $\sigma\in\Gal(k/\Q)$ and every $P\in A$,
\[
                  2P=(P+\sigma P)+(P-\sigma P).
\]
The full rational and twisted groups identify the fixed and anti-fixed
summands on the right with the named generators of $H_0$.
Thus $2A\subseteq H_0\subseteq H$, the second inclusion following from
the verified relation
\[
                 D'_3=9D_0+3D_1-D_2-2D_3.
\]
It therefore remains only to show that the four displayed classes span
$A/2A$.

At the good prime $\mathfrak l=(173,\delta-22)$, the change of variables
\[
 u=-36/U,\qquad v=1296V/U^3
\]
puts the curve into the odd monic form
\[
 v^2=u^5+9u^4+864u^3+28512u^2+373248u+1679616.
\]
Its five roots modulo $\mathfrak l$, in the certified order, are
$15,90,119,122,164$.  Applying the usual odd-degree $u-T$ Kummer map and
then the five quadratic characters of $\F_{173}^{\times}$ to
$D_0,D_1,D_2,D_3$ gives the matrix
\[
 \begin{pmatrix}
 0&0&1&1\\
 1&0&1&1\\
 1&0&1&0\\
 1&1&0&1\\
 1&1&1&1
 \end{pmatrix}.
\]
The determinant of its first four rows is $1$ over $\F_2$.  Hence these
four classes are independent in $A/2A$.  Since
$\dim_{\F_2}A/2A=3+1=4$, they form a basis; as $A/H$ was already killed by
$2$, this proves $A=H$.  The archive contains both the short reconstruction
and an independent evaluation of every Kummer entry.

A Mordell--Weil sieve \cite{BruinStollSieve} with modulus $2240$ yields
twelve points
$P_1,\ldots,P_{12}$ such that every $Q\in C_2(k)$ satisfies
\begin{equation}\label{eq:section7-sieve}
            [Q-P_i]\in2240J_2(k)
\end{equation}
for a unique $i$.

Let $\PP$ be the unique prime of $k$ above $7$, let
$k_\PP$ be the completion, and put $R=\OO_{k_\PP}$, $\m=(\delta)$.  Thus
\[
                         v_\PP(\delta)=1,\qquad v_\PP(7)=2.
\]
Use Flynn's integral $\mathbf P^{15}$ model of $J_2$ with origin
$(1:0:\cdots:0)$ and normalized coordinates $s_i=a_i/a_0$.  On the
origin neighbourhood $\mathcal N$, the projective-coordinate framework of
\cite{CasselsFlynn} and Corollary~2.2 and Theorem~3.5 of \cite{Flynn}
identify $(s_1,s_2)$ as integral formal parameters and give an
integral formal group law.  Define
\begin{equation}\label{eq:G2}
 G[2]=\{A\in\mathcal N:s_1(A),s_2(A)\in\m^2\}.
\end{equation}
It is a subgroup, and the coordinate identities place every nonorigin
normalized coordinate in $\m^2$.

The exact origin-chart valuations of $2240D_1$, $2240D_2$, and $2240D_3$
in the adapted integral coordinates described in Appendix~\ref{app:flynn}
are
\begin{equation}\label{eq:section7-valuations}
\begin{array}{c|l}
D_1&0,2,2,4,4,4,6,6,6,6,8,8,8,8,8,16\\
D_2&0,5,3,10,8,6,15,13,11,9,20,18,16,14,12,20\\
D_3&0,2,2,4,4,4,6,6,6,6,8,8,8,8,8,16.
\end{array}
\end{equation}
The torsion generator is killed by $2240$, so
\begin{equation}\label{eq:2240G2}
                            2240J_2(k)\subset G[2].
\end{equation}

\subsection{The twelve local classes}\label{subsec:twelve-local}

For each $P_i$, exact pullback calculations prove
\begin{equation}\label{eq:local-pullback}
 [Q-P_i]\in G[2]\quad\Longrightarrow\quad
                         Q\equiv P_i\pmod{\PP^2}.
\end{equation}
At each of five regular finite bases, the $98$ depth-two congruence classes
split into one base class and $97$ rejected classes.  At each of the five
ramified finite bases, the complete tree is
\[
\begin{array}{c|rrrr|rr}
n&\text{tested}&\text{base}&\text{rejected}&\text{unresolved}
 &49\,\text{-digit lifts}&\text{admissible at }n+1\\ \hline
2&98 &1&91 &6  &294 &294\\
3&294&0&252&42 &2058&686\\
4&686&0&588&98 &4802&686\\
5&686&0&686&0  &--&--.
\end{array}
\]
The last two columns make the branching arithmetic explicit.  Each
unresolved pair modulo $\PP^n$ has $7^2=49$ two-coordinate digit lifts;
the next row counts only those lifts that also satisfy the curve equation
modulo $\PP^{n+1}$.  At the ramified special-fibre point this number is not
uniformly seven, which is why the tested-node column is not obtained by
multiplying the unresolved column by a fixed branching factor.
The singular special-fibre point $U\equiv4\pmod{\PP}$ is included in this
calculation.  At the two points at infinity, the opposite branch lies
outside $\mathcal N$ and the identity branch has
\[
                            s_1=\pm\tfrac12w+O(w^2),
\]
with unit linear coefficient.  Here a node is a full congruence class, and a
rejection is a uniform valuation statement on that class; no finite sample
is used as an exhaustion argument.

Combining \eqref{eq:section7-sieve}, \eqref{eq:2240G2}, and
\eqref{eq:local-pullback}, every $k$-point lies in one of the twelve
explicit $\PP^2$-disks.  Exact $7$-adic and $23$-adic conditions, followed
where necessary by Coleman integration, show that no disk contains a
solution-bearing point beyond its reference $P_i$.  The reference points
themselves either have parameter $0$, $1$, or $\infty$, or fail the required
rational admissibility condition; hence none yields a nonzero primitive
solution.

\begin{proposition}\label{prop:quartic-sector}
The sector $[3,4]$ is empty.
\end{proposition}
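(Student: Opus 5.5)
The proof assembles the chain of reductions in this section into one contradiction. Suppose a nonzero primitive solution exists. By the discussion preceding the statement, $A_\eta$ has a quartic direct factor $M$ and a cubic factor. By \cite[Proposition~4.3]{DahmenSiksek} the quartic factor is the field \eqref{eq:quartic-field}. By \cite[Lemma~7.2]{DahmenSiksek} we obtain a point $Q\in C_2(k)$ with $\rho(Q)=\eta\in\Q\setminus\{0,1\}$ and $U(Q)\notin\Q$, the latter by the norm argument given above.

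The first step is global. Since \eqref{eq:section7-MW} has been certified (rank $3$, torsion $\Z/10\Z$, and $2$-saturation via the Kummer matrix at $\mathfrak l\mid173$), the Mordell--Weil sieve \eqref{eq:section7-sieve} applies. It produces a unique $i$ with $[Q-P_i]\in2240J_2(k)$. Then \eqref{eq:2240G2}, which follows from the valuations \eqref{eq:section7-valuations} and the fact that $10\mid2240$, puts $[Q-P_i]$ in the formal subgroup $G[2]$. Finally \eqref{eq:local-pullback} gives $Q\equiv P_i\pmod{\PP^2}$.

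The second step is local and is the main obstacle. One must show that each of the twelve $\PP^2$-disks contains no solution-bearing point other than $P_i$. The plan is to parametrize each disk by a uniformizer $t$ at $P_i$, using the integral formal parameters $s_1,s_2$ of Flynn's model \cite{Flynn}. In these coordinates we would write the condition $[Q-P_i]\in2240J_2(k)$ as the vanishing of a Coleman-integral combination. Specifically, since $\rank J_2(k)=3$ while $[k:\Q]\cdot g=4$, there is a nontrivial annihilating differential pairing over $k_\PP$. We would then combine it with the extra constraint $\rho(Q)\in\Q$, which is the restriction-of-scalars condition. This gives two $\PP$-adic power series in $t$, and Strassmann-type bounds on their common zeros should show that $t=0$ is the only zero. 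Where the $7$-adic Newton polygon is not decisive, the fallback is to use the $23$-adic disks, as the text indicates.

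The delicate cases are the five ramified bases and the singular point $U\equiv4$. There the branching table shows that the depth-$5$ tree is fully rejected, so we only need the reference points themselves. For the points at infinity, the unit linear coefficient $s_1=\pm\tfrac12 w+O(w^2)$ makes the parameter injective on the disk.

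The last step checks the twelve reference points directly. Each has $\rho(P_i)\in\{0,1,\infty\}$, or has $\rho(P_i)\notin\Q$, or has $U(P_i)\in\Q$, or gives a rational value that is not a reduced quotient of a fifth power by a seventh power. In every case $P_i$ gives no nonzero primitive solution. Hence no $Q$ exists, and the sector $[3,4]$ is empty.
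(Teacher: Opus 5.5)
Your global step is the paper's: the sieve \eqref{eq:section7-sieve} modulo $2240$, the inclusion \eqref{eq:2240G2} from the valuation rows and the order of $D_0$, the pullback \eqref{eq:local-pullback} to twelve $\PP^2$-disks, and a final check of the reference points. The gap is in how you dispose of the disks.

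The residue-tree table, and the expansion $s_1=\pm\tfrac12w+O(w^2)$ at the points at infinity, are the proof of \eqref{eq:local-pullback} itself. They reject the non-base depth-two classes: the $91$ rejected outright, and the $6$ unresolved ones after refinement to depth $5$. The base class, which is the $\PP^2$-disk of $P_i$, is never rejected. So ``the depth-$5$ tree is fully rejected, so we only need the reference points themselves'' does not follow. Likewise, the unit linear coefficient at infinity only shows that $[Q-P_i]\in G[2]$ forces $Q$ into the $\PP^2$-disk of $P_i$. It does not show that this disk contains no other point. As written, your argument gives no proof that the five ramified disks and the two disks at infinity contain no solution-bearing point other than $P_i$. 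The paper treats all twelve disks alike: exact $7$-adic and $23$-adic conditions, followed where necessary by Coleman integration.

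The analytic step you propose for the disks also needs to be set up correctly.
\begin{itemize}
\item The vanishing of an integral comes from $[Q-P_i]\in J_2(k)$ together with a functional annihilating the closure of $J_2(k)$ in $J_2(k_\PP)$. It does not come from membership in $2240J_2(k)$.
\item Since $\rank J_2(k)=3$ exceeds $[k:\Q](g-1)=2$, the dimension count for Chabauty over $k$ alone is not met. The annihilating functional is only $\Q_7$-linear on the four-dimensional $\Q_7$-Lie algebra, and the condition $\rho(Q)\in\Q_7$ is also not $k_\PP$-analytic.
\item You therefore get two $\Q_7$-valued analytic functions of the two $\Q_7$-coordinates of the disk. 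A one-variable Strassmann bound in a $k_\PP$-uniformizer $t$ does not apply.
\item You need either a two-variable Newton/Hensel count, as in the $P_1$ disk of Theorem~\ref{thm:rational-t}, or a restriction to the one-dimensional locus $\rho\in\Q_7$ parametrized by the rational value itself. The latter works only where $\rho-\rho(P_i)$ is a local coordinate and $\rho(P_i)\in\Q$.
\item Do not expect $t=0$ to be the only common zero. Allow for extra common zeros, and exclude them afterwards by the rationality or admissibility test.
\end{itemize}
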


\section{The irreducible septic sector}\label{sec:irreducible}

\paragraph{Provenance.}
The seven-field enumeration is Putz's classification theorem.  The elimination of
the six pure fields and the exceptional field is the new terminal part of the
argument.

Suppose now that $A_\eta$ is a field.  It has degree $7$, signature
$(1,3)$, is unramified outside $\{3,5,7\}$, and has the prescribed
local algebras at $2$, $3$, $5$, and $7$ of
\cite[Corollary~3.4 and Propositions~3.5--3.7]{DahmenSiksek}.
Putz's exhaustive Hunter enumeration
\cite[Theorem~3.50]{Putz} gives exactly seven possibilities:
\begin{equation}\label{eq:seven-fields}
 \Q\!\left(\sqrt[7]{3^a5}\right)\quad(1\leq a\leq6),
 \qquad
 \Q[T]/(T^7-483T^2+3955T-3945).
\end{equation}
We call the first six fields pure and the last field exceptional.

\subsection{The pure fields}

The Fano-resolvent construction and the rational-parameter theorem in this
subsection supply the geometric obstruction that was absent from the cited prior work.

The normal closure of a pure field has Frobenius group $F_{42}$ and unique
quadratic subfield
\[
                              K_7=\Q(s),\qquad s^2=-7.
\]

\subsubsection{The Fano-resolvent bridge}

The passage from a pure septic field to a curve is as follows.  Scale
$V=15T$ and write
\begin{equation}\label{eq:scaled-septic}
 h_t(V)=V^7-35V^6+315V^5-15^6t,
 \qquad t=\eta=\frac{X^5}{Z^7}.
\end{equation}
Let $v_1,\ldots,v_7$ be its roots.  A labelled Fano plane $F$ on these
seven roots has seven lines, each a three-element subset; define
\begin{equation}\label{eq:fano-invariant}
 M_F=\sum_{\{i,j,k\}\text{ a line of }F}v_iv_jv_k,
 \qquad q_F=\frac{M_F}{225}.
\end{equation}
There are thirty labelled Fano planes.  They form two $A_7$-orbits
$\mathcal F_+$ and $\mathcal F_-$ of fifteen planes.  Since
\[
 \disc(h_t)=-7^7 15^{36}t^4(t-1)^2,
\]
the two orbit polynomials are conjugate over $K_7$.  After fixing the
Vandermonde orientation, put
\begin{equation}\label{eq:plus-resolvent}
 R_+(q,t)=\prod_{F\in\mathcal F_+}(q-q_F)
          =\frac{U(q,t)+sW(q,t)}2,
 \qquad U,W\in\Z[q,t].
\end{equation}
The normalization of $R_+=0$ is denoted $C_+$.  On its fifteen sheets the
branch permutations above $t=0,1,\infty$ have cycle structures
\[
                         5^3,\qquad3^5,\qquad7^2 1.
\]
Riemann--Hurwitz therefore gives $g(C_+)=3$.

To see why this quotient detects the six pure fields, call
$F\in\mathcal F_+$ and $G\in\mathcal F_-$ compatible when they have no
line in common.  We use unordered pairs $\{F,G\}$, since odd
permutations exchange the two $A_7$-orbits.  Each $F$ has exactly eight
compatible $G$'s, giving 120 such pairs.  The normalization of the
corresponding cover is a curve $X_{42}/\Q$.  The stabilizer of an
unordered compatible pair in $A_7$ is the Frobenius group $F_{21}$,
and its stabilizer in $S_7$ is $F_{42}$.  Thus $X_{42}$ has degree $120$
over the $t$-line and, after base change to $K_7$, the forgetful map
\begin{equation}\label{eq:eight-over-fifteen}
                         X_{42,K_7}\longrightarrow C_+
\end{equation}
has degree $8$.  The branch permutations on the 120 sheets have cycle
structures $5^{24}$, $3^{40}$, and $7^{17}1$, so $g(X_{42})=20$.

If \eqref{eq:scaled-septic} cuts out one of the pure fields
$\Q(\sqrt[7]{3^a5})$, its normal closure has group $F_{42}$.  The
fixed-field/resolvent correspondence supplies a rational point on
$X_{42}$, hence a $K_7$-point on $C_+$ through
\eqref{eq:eight-over-fifteen}.  Both maps preserve $t$.  A nonzero
solution has $t\notin\{0,1,\infty\}$, so the resulting point is not in a
branch fibre.  This constructs the claimed bridge rather than merely
inferring it from matching Galois groups.

The canonical model of $C_+$ is the explicit smooth plane quartic printed
in Appendix~\ref{app:Cplus}.  Its smoothness proves again that its genus is
$3$.  The inverse parameter is the exact rational function
$t=-G_t/H_t$; all coefficients of the two quintics are printed in
Table~\ref{tab:t-parameter-coefficients}.
The archive file \path{p7_f42_canonical_exact.json.gz} specifies
the forward coordinates $N_j(q,t)$ for $j=1,2,3$, and the inverse
forms $G_q,H_q$.  Exact reduction modulo $R_+(q,t)$ verifies
\[
 Q(N_1,N_2,N_3)=0,\qquad
 G_q(N)+qH_q(N)=0,\qquad G_t(N)+tH_t(N)=0,
\]
where $Q$ is the printed quartic and the two inverse denominators are
nonzero in the function field.  Recovering both generators $q$ and $t$
proves that this map induces an isomorphism of function fields, hence of
the smooth projective models, preserving the parameter.

\begin{theorem}[rational-parameter theorem]\label{thm:rational-t}
Let $J_+=\operatorname{Jac}(C_+)$.  Then
\[
 \{Q\in C_+(K_7):t(Q)\in\Pone(\Q)\}
       =\{P_1,P_2,P_3,P_4,P_5\}.
\]
The corresponding parameter values are $0,1,\infty,\infty,\infty$.
\end{theorem}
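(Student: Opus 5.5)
We plan to determine the set of $K_7$-points of $C_+$ whose parameter $t$ is rational by combining a Mordell--Weil computation for $J_+$ over $K_7$ with a sieve and a local analytic argument.

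\textbf{Step 1: reduce to a rational-points problem.} The condition $t(Q)\in\Pone(\Q)$ is Galois-theoretic. Let $\sigma$ generate $\Gal(K_7/\Q)$. Then $\sigma(C_+)=C_-$, the conjugate quartic, which is the normalization of $R_-=0$. So $Q\mapsto(Q,\sigma Q)$ sends our set into the $\Q$-rational points of the fibre product $C_+\times_{\Pone}C_-$. By Weil restriction, this is the same as the set of $\Q$-points of $\operatorname{Res}_{K_7/\Q}C_+$ lying over the diagonal $\Pone\subset\operatorname{Res}\Pone$.

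We will handle this as a restricted-scalars Chabauty problem (RSC), in the style of Siksek's Chabauty for symmetric powers and restriction of scalars. The working inequality is
\[
  \rank J_+(K_7) \le 2\cdot 3-1-1.
\]
The Weil restriction has dimension $6$. The condition that $t$ be rational imposes one extra equation, so we need at least two independent annihilating differentials.

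\textbf{Step 2: determine $J_+(K_7)$.}
\begin{itemize}
\item First we search for small-height divisors supported on the five points $P_1,\dots,P_5$ and on low-degree points of the smooth plane quartic.
\item Next we run a $2$-descent on the plane quartic, via bitangents or the Bruin--Poonen--Stoll algorithm, to obtain an upper bound for the rank.
\item Torsion is pinned down by reduction at two good split primes of $K_7$, for example primes above $2$ (which split in $K_7$), $11$ or $23$.
\end{itemize}
We expect the differences $[P_i-P_1]$ to generate a finite-index subgroup of small rank, ideally rank at most $3$. If instead the rank is $4$ or $5$, we will use the degree-$8$ map $X_{42,K_7}\to C_+$ only for discovery, and fall back on a Mordell--Weil sieve with extra primes.

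\textbf{Step 3: sieve, then argue locally in each residue disk.}
\begin{itemize}
\item A Mordell--Weil sieve modulo $NJ_+(K_7)$ at several split primes $\mathfrak p\mid p$ uses the fact that both $\mathfrak p$ and $\bar{\mathfrak p}$ must reduce compatibly with $t\in\F_p$. This places every admissible point in the residue classes of $P_1,\dots,P_5$.
\item In each such class we fix a split prime $p$ (say $p=11$ or $23$) and a uniformizer $z$ at $P_i$. We then expand the Coleman integrals of $H^0(C_+,\Omega^1)$ at both embeddings of $K_7$ into $\Q_p$, together with $t-\sigma(t)$ expressed through the two local coordinates.
\item We impose the vanishing of the annihilating combinations of integrals, which is linear in the logarithms of the generators, along with the rationality condition on $t$. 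A Strassmann or Weierstrass-preparation bound will then show that $P_i$ is the unique zero in its disk.
\end{itemize}
The points above $t=\infty$ need separate charts, because there $t$ has a pole of order $7$ at one point and simple poles at the other two (matching $7^2\,1$ on the fifteen sheets). Finally we read off $t(P_i)\in\{0,1,\infty,\infty,\infty\}$ from the explicit model.

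\textbf{Main obstacle.} We expect the crux to be Step 2 together with the rank condition. Proving that the known points generate a finite-index subgroup, and that the rank is small enough for RSC (a Bruin--Stoll style descent over a quadratic field for a genus-3 quartic), is the hard part. If the rank is too large, we will first try to find a nonconstant morphism from $C_+$ or $X_{42}$ to an elliptic curve or abelian surface of small rank over $K_7$ that is compatible with $t$, and apply elliptic Chabauty on that quotient instead.
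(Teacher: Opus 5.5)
Your route is the paper's. It runs a bitangent $2$-descent in the Bruin--Poonen--Stoll framework to get $\rank J_+(K_7)=4$, with $D_i=[P_i-P_1]$ spanning a finite-index subgroup. It then sieves at split primes ($23,43,\dots,113$), coupling the two reductions and requiring the two reductions of $t$ to agree. Finally it applies restriction-of-scalars Chabauty at $23$ with the $6-4=2$ logarithmic functionals, and Siksek's tangent-determinant criterion closes the disks of $P_2,\dots,P_5$. Rank $4$ is therefore exactly the case handled directly, not a fallback. Your counting is also inverted: rationality of $t$ is an extra equation, so it lowers rather than raises the number of annihilators needed. You are right that the descent is the crux. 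The fake-Selmer containing space is $5$-dimensional, and its complementary class has to be removed by a $5$-adic obstruction in the true-versus-fake comparison before the bound $4$ is available.

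One ingredient the sieve needs is missing from your outline. Finite index of $H_0=\langle D_2,\dots,D_5\rangle$ is enough for the annihilator, since a characteristic-zero functional vanishing on $H_0$ vanishes on $J_+(K_7)$. But sieving modulo $N$ requires the generating subgroup to surject onto $J_+(K_7)/NJ_+(K_7)$, i.e.\ to have index prime to $N$. The paper sieves modulo $400$, and $H_0$ fails at $5$. Write $(t)_0=5A$ and use $(t)_\infty=7P_3+7P_4+P_5$. Then $E=3[A-3P_1]-4D_3-4D_4$ satisfies $5E=D_3+D_4+3D_5$, so $E\notin H_0$ and $5$ divides $[J_+(K_7):H_0]$. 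The paper therefore sieves with $H_1=\langle D_2,E,D_4,D_5\rangle$, after proving $2$-saturation of $H_0$, $5$-saturation of $H_1$, and $J_+(K_7)[2]=J_+(K_7)[5]=0$.

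Two smaller corrections. First, the pole orders of $t$ at $P_3,P_4,P_5$ are $7,7,1$, not $7,1,1$; they must sum to $15$. Second, in the disk of $P_1$ the two annihilating integrals have a second simple common zero. Uniqueness there does not come from a Strassmann bound. It comes from checking that $t$ takes different values under the two embeddings at that zero (their difference has $23$-adic valuation $5$). Imposing rationality of $t$ locally, as you propose, is therefore essential rather than optional.
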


\begin{proof}
Put $D_i=[P_i-P_1]$ and $H_0=\langle D_2,D_3,D_4,D_5\rangle$.
Appendix~\ref{app:rank-proof} gives the complete containing-space and local
stopping argument in the genus-three framework of \cite{BPS}, including
the $7$-adic kernel witness and the literal $5$-adic obstruction.  Together
with the independently checked lower bound, it proves
\[
 \dim_{\F_2}\operatorname{Sel}_2(J_+/K_7)=4,
 \qquad \rank J_+(K_7)=4,
 \qquad \Sha(J_+/K_7)[2]=0.
\]
The independent $2$-saturation witness shows that $D_2,D_3,D_4,D_5$
form a basis modulo $2$.  Thus $H_0$ has odd index.  The existence of the fifth-division class used below can
be seen from the branch divisor.  Write $A=(t)_0/5$, a $K_7$-rational
divisor of degree three, and set $B=[A-3P_1]$.  The pole divisor is
$(t)_\infty=7P_3+7P_4+P_5$, so
\[
 5B=7D_3+7D_4+D_5.
\]
Consequently the class $E=3B-4D_3-4D_4$ satisfies
\begin{equation}\label{eq:Erelation}
                         5E=D_3+D_4+3D_5.
\end{equation}
Set $H_1=\langle D_2,E,D_4,D_5\rangle$.  Equation~\eqref{eq:Erelation}
gives $H_0\subseteq H_1$.  Four independent local $C_{25}$ rows have
determinant $4$ modulo $5$, and $J_+(K_7)[5]=0$; hence $H_1$ is
$5$-saturated.  Its index is therefore coprime to $400$, and
$H_1\to J_+(K_7)/400J_+(K_7)$ is surjective.  This is the coefficient
coverage used in the sieve.

At the split primes
\[
                     23,43,67,71,79,107,109,113,
\]
both reductions of the curve are enumerated, as are the images of every
reduced point in the relevant local quotients of $J_+$.  The same global
coefficient vector must occur at both places.  In addition, rationality of
$t(Q)$ imposes equality of the two projective reductions of $t$.  The
resulting finite sieve has the following exact counts:
\[
\begin{array}{lr}
\toprule
\text{stage}&\text{surviving coefficient classes}\\
\midrule
\text{coupled $2$-primary sieve modulo $16$}&592\\
\text{$5$-primary prefilter modulo $25$}&1252\\
\text{joint point fingerprints modulo $400$}&144\\
\bottomrule
\end{array}
\]
The final classes use exactly the five vectors modulo $25$
\[
 (0,0,0,0),\ (1,0,0,0),\ (0,5,24,22),\
 (0,0,1,0),\ (0,0,0,1),
\]
and their projection at the two primes above $23$ consists precisely of the
five ordered residue pairs of $P_1,\ldots,P_5$.

The combined $23$-adic logarithm matrix has size $6\times4$ and rank $4$.
The resulting two independent logarithmic functionals vanish on $H_0$.
Since $H_0$ has finite index, they vanish on all of $J_+(K_7)$: for any
$P\in J_+(K_7)$, a nonzero integer multiple of $P$ lies in $H_0$, and
the logarithm takes values in characteristic zero.  At the disks of $P_2,P_3,P_4,P_5$, the Siksek tangent
determinants modulo $23$ are
\begin{equation}\label{eq:siksek-dets}
                              12,\quad16,\quad4,\quad19.
\end{equation}
This is the number-field Chabauty criterion of \cite{Siksek}; the values are
units, so each disk contains only its reference point among common
Coleman zeros.

At $P_1$, write the two local parameters as $23u,23v$, so that the
whole product residue disk is parametrized by $(u,v)\in\Z_{23}^2$.
For the two annihilator integrals $F_1,F_2$, use the normalized equations
$F_1/23=0$ and $(F_2-6F_1)/23^2=0$.  The second numerator is integrally
divisible by $23^2$: its linear coefficients vanish modulo $23$ before
scaling, and all higher terms supply the extra factor.  The reductions are
\[
              2u+4v=0,\qquad
              21u+6v+14u^2+21v^2=0.
\]
They have exactly two simple reduced zeros,
\[
                         (u,v)=(0,0),\qquad(14,16),
\]
with determinants $20$ and $3$.  Multivariate Hensel lifting gives one
exact common zero in each of these two classes and none elsewhere.
The first is $P_1$.  The arithmetic retains seven digits after both
normalizing divisions: an omitted integrated term of exponent $k\ge12$
has valuation at least $k-v_{23}(k)\ge12$, and the coefficient arithmetic
retains the required precision.  Multiplication by $23$ gives the local
coordinates modulo $23^8$.  The canonical parameter denominator is a
unit at both lifted endpoints, so its evaluation preserves those eight
digits.  At the second zero, the two parameter values modulo
$23^8=78{,}310{,}985{,}281$ are
\[
 t_{s=4}=59{,}233{,}664{,}629,\qquad
 t_{s=19}=24{,}239{,}267{,}738.
\]
Their difference has exact $23$-adic valuation $5$, with first nonzero
quotient digit $9$.  Hence this common Coleman zero does not have rational
$t$.  The five reference points are therefore the entire rational-parameter
locus.
\end{proof}

Each $P_i$ in Theorem~\ref{thm:rational-t} lies above one of the branch
values $0$, $1$, and $\infty$.  A nonzero primitive solution would give a
point with rational parameter outside this set.  Consequently none of the six pure fields occurs.

\subsection{The exceptional field}

The general modularity and conductor theorems used here are cited inputs.
The project-specific specialization, exhaustive packet and ray-character
interfaces, and the terminal $q=29$ comparison form the new finite
elimination.

We use the Frey abelian variety of Pacetti and Villagra Torcomian
\cite{PVT} for
\begin{equation}\label{eq:PVT-equation}
                             a^5+b^p+c^3=0.
\end{equation}
In our variables
\begin{equation}\label{eq:PVT-map}
                (a,b,c)=(X,-Z,Y),\qquad
                t_0=-\frac{a^5}{c^3}=\frac{\eta}{\eta-1}.
\end{equation}
Put $F=\Q(\sqrt5)$ and let
\(\bar\rho_7:G_F\to\operatorname{GL}_2(\overline{\F}_7)\) be the residual
representation of the weight-two realization of their plus motive at $t_0$.  The use of the modular
method at the small exponent $p=7$ rests on the following precise inputs,
not on the headline large-exponent theorem of \cite{PVT}.
Theorem~2.4(1) of \cite{PVT} states that for $r=3$ and $q\geq5$ the plus
motive is modular for every nonzero rational specialization; the
specializations used below are nonzero.  Its determinant is
the mod-$7$ cyclotomic character (equivalently, the compatible system has
trivial Nebentypus), and Theorem~2.1(3)--(4) gives the needed ramification
control and finiteness at every prime above $7$.  Here $7$ is inert and
unramified in $F$, so there is a single such prime.

Now assume that $\bar\rho_7$ is irreducible.  Apply
\cite[Theorem~7.8]{PVT} with $p=7$ to the specialization
\eqref{eq:PVT-map}.  This specialized theorem incorporates modularity,
the local conductor calculation, and Hilbert level lowering; its proof
cites \cite{BreuilDiamond} for the latter.  It gives a parallel-weight-two
Hilbert newform over $F$, with trivial Nebentypus,
whose residual representation is $\bar\rho_7$ and whose level is one of
\begin{equation}\label{eq:four-levels}
 3^{\epsilon_3}(\sqrt5)^{\epsilon_5},\qquad
 (\epsilon_3,\epsilon_5)\in\{2,3\}^2.
\end{equation}

The conclusion here is a newform conclusion, but the finite elimination below
does not depend on reconstructing a new quotient.  Let $V_N$ be the full
mod-$7$ cuspidal Brandt module at one of the four levels~$N$ in
\eqref{eq:four-levels}.  Our earlier computation first imposed eighteen
local trace-polynomial conditions, so its output must not be called the full
ambient module.  We instead verify the coverage of every condition.

Section~7.4 of \cite{PVT} gives an exhaustive trichotomy at an auxiliary
prime: ordinary specialization, one of the two degenerations $t=0,\infty$,
or the level-lowering specialization $t=1$.  From the complete upstream
\texttt{Data.txt}, the certificate independently reduces all $6253$ trace
polynomials at $37$ primes modulo~$7$.  Separately, it reconstructs all
$1264$ ordinary parameters at those $18$ filter primes and all
$128$ boundary character possibilities at the $16$ split primes.  It
performs the required relative-degree-two trace transport for the boundary
data and adjoins both signs of $N(\mathfrak q)+1$.  Seventeen of those eighteen filters are exactly
the resulting squarefree allowed-trace polynomials.  At $\ell=131$ the
degree-$48$ filter in our earlier computation omitted the allowed level-lowering trace $1$,
since $-(131+1)\equiv1\pmod7$.  We therefore replace only that condition, in
memory, by $T^{49}-T$, which admits every $\F_{49}$-valued trace.

Write $W_N\subseteq V_N$ for the intersection after these seventeen verified
conditions and the condition at $131$ imposing no further restriction
on $\F_{49}$-valued Frey eigenvalues.  Characteristic-zero Jacquet--Langlands
\cite[Theorem~3.9 and \S4]{DembeleVoight} supplies the corresponding
Hecke eigenvector in the definite quaternionic function model.  The exact
order and neighbour computations verify the maximal order, class number
one, and the complete Hecke operators; all finite stabilizer orders divide
$120$, hence are prime to $7$.  The integral function lattice and its
cuspidal mass condition are therefore preserved at primes above $7$.
Scale the eigenvector to be primitive in this local lattice and reduce.
The resulting nonzero vector has the required residual eigenvalues.
The local trichotomy places it in $W_N$.  This proves coverage without
degeneracy-map subtraction.  A fresh simultaneous primary decomposition has
packet dimensions, with multiplicity, summing to $\dim W_N$ at every level.
The exceptional field is unramified at $2$ with local factor degrees
$[1,2,4]$.  If $2\mid Z$, the integral equation for $W=ZT$ reduces to
\[
 W^7+1=(W+1)(W^3+W+1)(W^3+W^2+1)\quad\text{over }\F_2.
\]
Its factors are distinct and irreducible, giving local degrees $[1,3,3]$,
a contradiction.  Hence $Z$ is odd, and exactly one of $X,Y$ is even.
In the variable convention $X^5+Y^3=Z^7$ used here, the prime-$2$
calculation gives the necessary residual trace $-1$ when $Y$ is even
and $0$ when $X$ is even.  Retaining every packet for which that scalar
is a root of the residual $T_2$ polynomial gives
\[
\begin{array}{c|rrrr}
 &22&23&32&33\\ \hline
 \dim W_N&8&38&28&98\\
 \text{all packets}&4&14&12&24\\
 Y\text{-even}&1&2&2&5\\
 X\text{-even}&2&10&3&11.
\end{array}
\]
Across all thirty-six retained packet occurrences, the set of residual
$T_{\mathfrak q}$ annihilators at $\mathfrak q\mid29$ is exactly
\[
 T,\quad T^2,\quad T+2,\quad T^2+5T+2,\quad T^2+3T+4.
\]
The terminal list already contains all five, because $T+2=T-5$ modulo~$7$;
together with $T-2$ it is the same six-factor list used in the resultant
calculation below.  Thus every residual representation supplied by level
lowering survives the preceding filters and is covered before the terminal
comparison.  The old/new calculation remains only a diagnostic.  We make no
complete-newspace claim and require no mod-$7$ injectivity or semisimplicity
statement for degeneracy maps.

Suppose instead that $\bar\rho_7$ is reducible.  Its semisimplification has
the determinant-compatible form
\begin{equation}\label{eq:reducible-semisimplification}
 \bar\rho_7^{\mathrm{ss}}
       =\psi\oplus\psi^{-1}\bar\chi_7.
\end{equation}
At a finite prime $\lambda\nmid7$, the cyclotomic character is unramified,
so the two characters in \eqref{eq:reducible-semisimplification} have the
same Artin-conductor exponent.  The conductor of the semisimplification
is at most that of the representation.  Thus the local conductor bounds
at the primes above $3$ and $5$ give
\[
 2a_\lambda(\psi)
 =a_\lambda(\bar\rho_7^{\mathrm{ss}})
 \leq a_\lambda(\bar\rho_7)\leq3,
 \qquad a_\lambda(\psi)\leq1.
\]
At every other finite prime away from $7$, the character $\psi$ is
unramified.

The prime $\mathfrak p=(7)$ is unramified and inert in $F$, with residue
field $\F_{49}$.  By the finite-flat character classification
\cite[Corollary~3.4.4]{Raynaud1974}, its restriction to inertia has the form
\[
 \psi|_{I_{\mathfrak p}}=\omega_2^{r_0+7r_1},
 \qquad r_0,r_1\in\{0,1\}.
\]
The character has level dividing two because it is a rank-one character
of the full local Galois group.  Here $\omega_2$ is a fundamental
character of level two, normalized so
that $\omega_2^{8}=\bar\chi_7|_{I_{\mathfrak p}}$.
The two mixed possibilities must be excluded globally.  Put
$\varepsilon=(1+\sqrt5)/2$ and
\[
 u=\varepsilon^8=13+21\varepsilon.
\]
The unit $u$ is totally positive and satisfies
\[
 u\equiv1\pmod{3(\sqrt5)},\qquad u\equiv-1\pmod{7}.
\]
Global reciprocity applied to $u$ shows that the value of $\psi$ on its
local reciprocity image at $\mathfrak p$ is $1$: at every other finite
place this follows from the conductor bounds just proved, and at both
real places it follows from total positivity.  The displayed inertial
character gives that value as $(-1)^{r_0+r_1}$.  Hence $r_0=r_1$.
If both are zero then $\psi$ is unramified at $\mathfrak p$; if both are
one then $\psi^{-1}\bar\chi_7$ is unramified there.  Interchanging the two
characters if necessary, we may therefore choose $\psi$ such that
\begin{equation}\label{eq:character-conductor}
 \operatorname{cond}(\psi)\mid3(\sqrt5).
\end{equation}
Including both real places gives a modulus containing the conductor of
$\psi$ regardless of its signs.  Its narrow ray group is
$C_4\times C_2$, so there are eight possible characters, all of which are
included in the comparison.

At $q=29$, take
$\mathfrak q=(29,\sqrt5-11)$; the conjugate prime gives the conjugate
packet and the same common-root test.  The exceptional septic polynomial
is squarefree modulo $29$ with factor degrees $[1,3,3]$.  Among
$\eta\in\F_{29}\setminus\{0,1\}$, the polynomial $\Phi(T)-\eta$ has
this pattern precisely for $\eta=10,14,24,28$.

The branch residues do not give this pattern.  If $29\mid X$, scale the
five-root cluster at $T=0$ using $\ord_{29}(\eta)=5\ord_{29}(X)$.
Its separable reduction has degrees $[1,2,2]$, since $29\equiv-1\pmod5$;
the complementary quadratic has degrees $[1,1]$.  If $29\mid Y$, the
corresponding three-root cluster at $T=1$ has degrees $[1,2]$, and the
complementary quartic has degrees $[1,1,2]$.  Both cases therefore have
pattern $[1,1,1,2,2]$.  If $29\mid Z$, the scaled coordinate $U=ZT$
gives reduction $15U^7-X^5$; its pattern is $[1,1,1,1,1,1,1]$ or $[7]$
because $\F_{29}$ contains the seventh roots of unity.  Hensel's lemma
applies to each scaled separable reduction.  Thus only the four ordinary
values remain.  By \eqref{eq:PVT-map} they correspond to
$t_0=14,10,25,15$.  Direct point counting on
\[
                y^2=5x^6-12x^5+10t_0x^3+t_0^2
\]
gives the following polynomials over $\F_7$ satisfied by the two
conjugate real-multiplication Frobenius traces.  Explicitly, if
$N_1=\#C_{t_0}(\F_{29})$, $N_2=\#C_{t_0}(\F_{29^2})$,
$A=30-N_1$, and $B=(A^2-(842-N_2))/2$, the degree-four Frobenius
polynomial is $X^4-AX^3+BX^2-29AX+29^2$, and the two traces satisfy
$T^2-AT+(B-58)$.  The table records the latter polynomial modulo $7$:
\begin{equation}\label{eq:curve-polynomials}
\begin{array}{c|c|c}
\eta&t_0&\text{curve polynomial}\\ \hline
10&14&T^2+6T+6\\
14&10&T^2+6T+4\\
24&25&T^2+4\\
28&15&T^2+2T+3.
\end{array}
\end{equation}
For clarity, the next list is an \emph{annihilating family}; its members are
not asserted to be irreducible or squarefree.  For each of the four ambient
cuspidal spaces, reduce the characteristic polynomial of the Hecke operator
$T_{\mathfrak q}$ modulo $7$ on every packet surviving the conservative
$T_2$ condition.  The preceding dimension accounting proves that every
residual eigenvalue $a_{\mathfrak q}(f)\bmod\mathfrak p$, for every relevant
newform and each coefficient prime $\mathfrak p\mid7$ realizing the required
Frey congruence, is a root of
one of the first four or the sixth displayed factors.  We retain the factors as they
occur in the packet calculation, so they need not be squarefree or minimal.
The reducible characters contribute the possible traces
$\psi(\mathfrak q)+\psi(\mathfrak q)^{-1}$.  The resulting complete
annihilating family is
\begin{equation}\label{eq:test-polynomials}
 T,\quad T^2,\quad T^2+5T+2,\quad T^2+3T+4,\quad T-2,\quad T-5.
\end{equation}
Here $T^2$ and $T^2+3T+4=(T-2)^2$ deliberately record multiplicity; for
the resultant test they have the same roots as $T$ and $T-2$.  For the specified prime $\mathfrak q$, the ray class has order two:
a totally positive generator is $6-\varepsilon$, whose square is
congruent to the totally positive unit $\varepsilon^2$ modulo
$3(\sqrt5)$; its class is nontrivial, since modulo $3$ the generator is
$-\varepsilon=\varepsilon^5$, whereas every totally positive unit is an
even power of $\varepsilon$, whose order modulo $3$ is $8$.
Thus the reducible traces are
$2$ and $-2$, covered by the displayed linear factors.  The
$24$ pairwise resultants of \eqref{eq:curve-polynomials} and
\eqref{eq:test-polynomials} form the matrix
\begin{equation}\label{eq:resultants}
 \begin{pmatrix}
 6&1&5&1&1&5\\
 4&2&3&1&6&3\\
 4&2&6&1&1&1\\
 3&2&6&2&4&3
 \end{pmatrix}\pmod7.
\end{equation}
Every entry is nonzero.  Thus the exceptional field is incompatible with
both the irreducible and reducible residual alternatives.

\begin{proposition}\label{prop:septic-sector}
The sector $[7]$ is empty.
\end{proposition}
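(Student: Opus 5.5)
The plan is to assemble Proposition~\ref{prop:septic-sector} from the case analysis already carried out in this section. Assume $A_\eta$ is a field. By Putz's classification \cite[Theorem~3.50]{Putz}, recorded in \eqref{eq:seven-fields}, $A_\eta$ is isomorphic to one of the six pure fields $\Q(\sqrt[7]{3^a5})$ or to the exceptional field. I will treat these two cases separately and derive a contradiction in each.

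\emph{Pure case.} Suppose $A_\eta\cong\Q(\sqrt[7]{3^a5})$. Its normal closure has group $F_{42}$. The fixed-field/resolvent correspondence then gives a $\Q$-point on $X_{42}$ above $t=\eta$. Pushing it forward along \eqref{eq:eight-over-fifteen} gives a point $Q\in C_+(K_7)$ with $t(Q)=\eta\in\Q$. Since the solution is nonzero, $\eta\notin\{0,1,\infty\}$, so $Q$ does not lie above a branch value. This contradicts Theorem~\ref{thm:rational-t}, because all five points of the rational-parameter locus lie above $0$, $1$, or $\infty$.

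One point in this step needs care: that the forgetful map $X_{42}\to C_+$ is defined over $K_7$ on the chosen component, and that it carries a $\Q$-point to a genuine $K_7$-point rather than to a pair of conjugates. I would settle this by noting that an $F_{42}$-stable unordered compatible pair $\{F,G\}$ has $F\in\mathcal F_+$ fixed by the index-two subgroup $F_{21}=F_{42}\cap A_7$. Over $K_7$, the Galois group of $h_\eta$ is exactly this subgroup, so $F$ itself is Galois-fixed and $q_F\in K_7$.

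\emph{Exceptional case.} Suppose $A_\eta$ is the exceptional field. I would first use the Newton-polygon analysis at $29$ given above. The exceptional polynomial has factor pattern $[1,3,3]$ modulo $29$. Branch residues, that is $29\mid XYZ$, give only the patterns $[1,1,1,2,2]$, $[1^7]$, or $[7]$. Hence $\eta\bmod 29\in\{10,14,24,28\}$, which means $t_0\in\{14,10,25,15\}$. The residual representation $\bar\rho_7$ is then either irreducible or reducible.

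\begin{itemize}
\item \emph{Irreducible.} \cite[Theorem~7.8]{PVT} gives a newform at one of the levels \eqref{eq:four-levels}. The coverage argument places its residual eigensystem in one of the retained packets in $W_N$, after the $T_2$ parity condition. So $a_{\mathfrak q}(\bar\rho_7)$ is a root of one of the packet factors listed in \eqref{eq:test-polynomials}.
\item \emph{Reducible.} The ray-character analysis gives trace $\psi(\mathfrak q)+\psi(\mathfrak q)^{-1}\in\{\pm2\}$, which is again a root of \eqref{eq:test-polynomials}.
\end{itemize}

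In either alternative, $\operatorname{tr}\bar\rho_7(\mathrm{Frob}_{\mathfrak q})$ is a root of the curve polynomial in \eqref{eq:curve-polynomials} for the relevant $t_0$. It is also a root of some polynomial in \eqref{eq:test-polynomials}. Two polynomials over $\F_7$ with a common root in $\overline{\F}_7$ have resultant $0$. But every entry of \eqref{eq:resultants} is nonzero, so no such common root exists, and this is the contradiction.

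The hardest point is justifying the comparison at $\mathfrak q$. One needs $\mathfrak q\nmid 7\cdot 3\cdot 5$, which holds. One also needs $\bar\rho_7$ to be unramified at $\mathfrak q$ with trace equal to the curve trace modulo the coefficient prime. This requires $t_0$ to be an ordinary specialization at $29$, which is exactly what excluding the branch residues gives. The comparison must further be made for both conjugate real-multiplication traces, and the curve polynomial accounts for both. I would verify these conditions using \cite[Theorem~2.1]{PVT} before invoking the resultants.
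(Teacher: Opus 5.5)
Your proposal is correct and follows the paper's own route. Putz's seven-field list is split into the pure fields, handled by the Fano-resolvent bridge and the rational-parameter theorem, and the exceptional field, handled by reduction at $29$ to the four ordinary classes, the irreducible/reducible split for $\bar\rho_7$, and the nonzero resultant matrix. Your remark that $F_{21}=F_{42}\cap A_7$ fixes the $\mathcal F_+$ member of the stable pair, so that the image is a genuine $K_7$-point of $C_+$, usefully spells out what the paper compresses into a single sentence.
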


\section{Proof of the main theorem}

\begin{proof}[Proof of Theorem~\ref{thm:main}]
Suppose that a nonzero primitive solution exists and reorder it as in
\eqref{eq:DS-order}.  The descent algebra $A_\eta$ belongs to exactly one
of the four sectors in \eqref{eq:four-sectors}.  Proposition
\ref{prop:rational-sector} excludes $[1,6]$.  Proposition
\ref{prop:quadratic-sector} excludes $[2,5]$.  Proposition
\ref{prop:quartic-sector} excludes $[3,4]$.  Proposition
\ref{prop:septic-sector} excludes $[7]$.  Every possible factorization is
therefore impossible, a contradiction.
\end{proof}

\section{Computational certification}

The computer-assisted portions of the proof are finite statements over
$\Q$, number fields, finite fields, and truncated local rings.  They are
organized so that discovery is not used as evidence: every accepted output
is checked against exact input data and against the mathematical predicate
stated in the relevant proposition.  The certificate archive serves two
different purposes.  It independently reconstructs finite calculations from
the cited arguments of Dahmen--Siksek, and it supplies the load-bearing arithmetic
for the new seven-field elimination.  The provenance column below keeps
those purposes distinct.

\subsection{Exact arithmetic layer}

The exact layer checks polynomial identities, resultants, discriminants,
field embeddings, ideal-class representatives, divisor relations, point
incidence, and rational substitutions.  Integer and rational calculations
are repeated without floating-point arithmetic.  Manifests identify the
input bytes and the corresponding outputs.

\subsection{Finite exhaustion layer}

Whenever a finite search is used as an exhaustion, the searched set is
defined before enumeration.  This applies to local factor patterns, Selmer
classes, reduced curve points, Mordell--Weil coefficient classes, residue
trees, ray characters, newform packets, and resultant comparisons.  A
residue-tree node denotes a complete congruence class.  A rejected node is
accompanied by a uniform congruence or valuation obstruction; it is not a
sample point.

\subsection{Local analytic layer}

Formal-group and Coleman calculations record their working precision,
unit pivots, ranks, determinants, and higher-precision replays.  Divisions
are made only by certified units or with their exact loss of precision
recorded.  Simple-root conclusions are accompanied by a nonzero reduced
Jacobian determinant.  Singular classes are subdivided or normalized until
the stated conclusion follows.
For the depth-two residue trees of Section~\ref{subsec:twelve-local}, the
effective precision certificate is the independent exact checker
\path{audit/work/v3_audit/section7_independent_local.py} (prior-evidence
job \texttt{section7\_precision\_bounds}), which recomputes all ten trees
with explicit visibility checks at every rejected node; the generic
precision bound that the original helper
\path{verify_section7_formal_group_lemma_7_5_v2.py} assigns to
conjugate-branch coordinates is not relied upon, and that helper is
unchanged in the present companion.

\subsection{Software and trust boundary}

The calculations use PARI/GP \cite{PARI} for exact number-field arithmetic,
SageMath \cite{Sage} for curve and finite-field calculations, and Magma
\cite{Magma} for descent,
Mordell--Weil, saturation, newform, and Chabauty routines.  Their role is
the same as that of large integer arithmetic or a table of modular forms in
a conventional proof: the inputs, outputs, and verification conditions are
part of the mathematical record.  The argument also imports the general
theorems cited in the bibliography, rather than attempting to reprove them
from first principles.  In the cubic--quartic sector, the rational and
twisted Mordell--Weil and torsion results in
\cite[Lemma~7.3]{DahmenSiksek} remain explicit inputs.
The finite-field $173$ Kummer argument supplies the separate
$2$-saturation step; it does not replace those rank and torsion inputs.
Seven archived Magma inputs, including both full-group computations,
were executed through the official Magma calculator, version V2.29-10.
Their exact inputs, outputs and mathematical checks are included in
the companion verification materials.  In both full-group computations,
the returned \texttt{finite\_index} and \texttt{proved} flags are true,
and the returned rank bounds are respectively $1$ and $2$; the scripts
also verify the named generators in the returned groups.  These executions use Magma's
algorithms; they are not independent implementations.  The numerical
height comparison supplements, and does not replace, the separate exact
height bounds.

\paragraph{Substantial AI-assisted research and writing.}
OpenAI Codex was used extensively in developing and checking mathematical
arguments, computational investigation, program and certificate development,
verification design, debugging, release preparation, and drafting and
revising the manuscript.  OpenAI ChatGPT also supported the research and
writing.  Anthropic Claude acted as an independent auditor throughout:
it reproduced the finite data from published inputs, reviewed every
draft, and checked the numbering and content of the cited results
against their sources.  Its findings led to the model-intrinsic formal-group
argument at the prime above $7$ and to the scope statement for
solutions with a unit coordinate in Section~1, and it reconstructed
Dahmen--Siksek's database-free identification of $\Q(\sqrt{-35})$ as a
certified calculation (an earlier draft wrongly credited that
identification as new; the error was found in pre-submission review); it supplied the
characterization of $\Phi$ and the derivation of the ramification
statement in Section~\ref{sec:descent}; it independently verified the
bitangent code data used in Appendix~\ref{app:rank-proof} (the $315$
syzygetic tetrads, the rank $21$, and the weight enumerator); and it
verified the digest, byte count and integrity check of the companion
archive and prepared the final arXiv source.
Peter Chocian
originated the project's discovery approach and research direction,
supplied key inputs, reviewed the evidence, and accepts responsibility for
all claims.  AI-generated suggestions and reviews are not mathematical
certificates: the proof rests on the arguments given here, the cited
results and the specified computations.  These AI-assisted reviews do not
constitute independent human peer review or proof-assistant formalization.

\paragraph{Licence.}
Copyright \copyright\ 2026 Peter Chocian.  The author's original manuscript
and documentation are licensed under
\href{https://creativecommons.org/licenses/by/4.0/}{Creative Commons
Attribution 4.0 International}; the author's original program code is
licensed under the MIT License.  These grants exclude third-party papers,
software and other material, whose rights remain with their respective
holders.  In particular, they do not grant permission to redistribute
the third-party material in the frozen companion archives.

\subsection{Companion certificate archive}

The programs, exact inputs, certificates and verification records
are available from the repository
\href{https://github.com/bbpcho/primitive-357}{\texttt{bbpcho/primitive-357}}.
The computational companion is
\path{PRIMITIVE_357_REPLAY_COMPANION_2026-09-23_V4.zip}
(298483791 bytes). Its public release tag is
\begin{center}
\href{https://github.com/bbpcho/primitive-357/releases/tag/replay-companion-2026-09-23.1}{\texttt{replay-companion-2026-09-23.1}}.
\end{center}
Its SHA-256 digest is
\begin{quote}\small\ttfamily
ecc39d2944092fc2efbfdd17ffe9861e6c1ca68247b06af80bd7bd4bda166c37
\end{quote}
with distribution manifest \texttt{c376222d\ldots} and logical input
manifest
\begin{quote}\small\ttfamily
91a2bb027747b6663b5e075d31001e8ee6d12eec3025dd53f86818afffd2ef5d
\end{quote}
($2056$ logical files, $74$ separately acquired public inputs,
dependency lock \texttt{478111f4\ldots}).
It supersedes the archive with tag \texttt{replay-companion-2026-09-22.1},
which withheld the author's internal reports as author-held inputs and
therefore could not be replayed from public material, the replayed
distribution with tag \texttt{replay-companion-2026-09-15.2}, and the
two withdrawn archives with historical tags
\begin{center}\small
\texttt{verified-2026-09-14.1}\\
\texttt{first-submission-2026-09-15.1}.
\end{center}
Relative to the replayed distribution, the present archive removes
$72$ exploratory working reports (narrative documents that are inputs
of no executed calculation) and regenerates the manifests, indices,
ledgers and certificates that recorded their digests; ten verifiers
that certified their own narrative report as an artifact no longer do
so, and every pinned manifest row count is adjusted by the
corresponding removal.  No mathematical acceptance predicate, exact
arithmetic input, or certified mathematical value is changed.  The file \path{RELEASE_DELTA_2026-09-23.json} lists
every removed document, every edited verifier and record, and every
regenerated file.  The current paper and its small arXiv source
archive are separate assets; the earlier manuscript draft inside the
computational records is retained as provenance.

Because its expanded tree differs from the recorded one, the complete
declared suite was rerun from the public archive and its $74$ public
inputs on an independent machine (Ubuntu~24.04, aarch64; conda-forge
SageMath~10.9 with Python~3.12; PARI/GP~2.15.4 from the Ubuntu
package), with network access blocked and no access to the author's
workspaces.  The result is
\texttt{PASS\_FRESH\_EXTERNALIZED\_COMPLETE\_REPLAY}: all $13$ sector and
interface records, all $65$ prior jobs, and all $16$ rank-local checks
passed, in $2562$ seconds on 22 September 2026, with integrated result
digest \texttt{e1941808\ldots} and independent envelope
\texttt{6ad3f186\ldots}.  The record archive
\path{PRIMITIVE_357_V4_REPLAY_RECORD_SPARK_2026-09-22.zip}
(1082797043 bytes, SHA-256 \texttt{dfa88210\ldots}) is a release asset
alongside the companion.  Two runtime requirements were observed and
are documented in the companion: the exact-comparison job
\texttt{reconstruct\_raw66} requires PARI~2.15.4 (version~2.17
returns equivalent but not byte-identical representatives, and the
gate fails by design), and the replay must be launched with standard
input detached from a terminal.

The companion stores repeated evidence once and reconstructs the exact
logical input layout for replay. Its explicit dependency lock gives the
source, size and SHA-256 digest of every separately acquired input.
The Dahmen--Siksek, PVT and BPS papers and unlicensed upstream code
snapshots are acquired from their sources rather than redistributed.
Project modifications to pinned upstream source files are reconstructed
locally and checked against their exact expected digests. Third-party
material retained under its own licence is identified in the companion's
attribution notice. Seven historical executables used only for byte
identity checks in the earlier package are omitted; their identities and
the affected provenance checks are recorded explicitly. No new execution
of those binaries or of the historical Magma jobs is claimed.

The public integrity check, run from the extracted companion root, is
\begin{center}
\texttt{python3 -B scripts/verify\_companion.py}.
\end{center}
It verifies the exact public file set and logical input mapping.
The separate script \path{scripts/replay_companion.py}, supplied with
explicit software and input paths, runs the full declared suite from
new output directories. The recorded replay of 15 September was run from a clean
extraction of the replayed distribution under the same isolation, on
macOS with the recorded SageMath and PARI, with the same outcome; the
rank-local checks consume five freshly generated inputs.  The expanded
logical inputs and separately acquired inputs are authenticated before
and after execution.
The inherited internal identifier \texttt{verified-2026-09-14.1} is
retained only for verifier compatibility; the input manifest was
recomputed for the present distribution.

The companion guide gives acquisition, reconstruction and replay
commands, runtime requirements, dependency indexes, successful logs and
retained imported premises. The rank route includes the local-image
and fake-kernel stopping proofs in Appendix~\ref{app:rank-proof},
the same-literal-$c$ comparison and the $H_0/H_1$ index statement.
Supplementary argument-audit records and the seven earlier Magma
executions are identified separately. Integrity checking alone is not
an arithmetic replay.
The principal correspondence is:
\begin{center}
\begin{tabular}{@{}>{\raggedright\arraybackslash}p{.22\textwidth}
                    >{\raggedright\arraybackslash}p{.20\textwidth}
                    >{\raggedright\arraybackslash}p{.48\textwidth}@{}}
\toprule
mathematical assertion & provenance & certificate family\\
\midrule
four exhaustive factor sectors
  & Dahmen--Siksek; reconstructed
  & signed global-algebra superselection\\
Proposition~\ref{prop:rational-sector}
  & Dahmen--Siksek; reconstructed
  & rational-factor composition and its Mordell--Weil, sieve, and local packages\\
Proposition~\ref{prop:quadratic-sector}
  & Dahmen--Siksek; reconstructed
  & database-free quadratic router (Dahmen--Siksek, Proposition~4.2) and quadratic-factor reconstruction\\
Proposition~\ref{prop:quartic-sector}
  & Dahmen--Siksek; new formal-group certification
  & quartic router, Mordell--Weil basis, sieve, and intrinsic formal-group packages\\
Theorem~\ref{thm:rational-t} and Appendix~\ref{app:Cplus}
  & new
  & Fano-cover, canonical-quartic, rational-parameter, and split-prime packages\\
exceptional-field elimination
  & new, using cited modular theorems
  & Putz--PVT interface and finite $q=29$ comparison packages\\
Proposition~\ref{prop:septic-sector}
  & new terminal composition
  & irreducible-sector composition package\\
Theorem~\ref{thm:main}
  & new global composition
  & complete four-sector proof graph and terminal certificate\\
\bottomrule
\end{tabular}
\end{center}
The archive distinguishes identity checks, arithmetic replay, and the
mathematical implications of the resulting certificates.  A checksum
establishes identity, not a theorem.  The adopted Python, PARI/GP and
SageMath programs reconstruct the specified arithmetic layers, with errors
and precision failures treated as failures.  The companion guide
distinguishes imported literature results, authenticated Magma execution
records, and independently reconstructed calculations.
The global rank and local stopping conclusions use the stated general
theorems and verified hypotheses as well as the replayed arithmetic.

\section{Concluding perspective}

The proof is not driven by the size of a putative solution.  It represents
that solution simultaneously by a descent algebra, local factor labels,
divisor classes, finite quotient coordinates, and $p$-adic analytic data.
Each representation imposes an exact compatibility condition.  The global
contradiction comes from the empty intersection of those conditions.

Everything arithmetic in the proof is specific to the signature: the cover
$\Phi$, the seven-field list, the Fano resolvent and $C_+$, the Frey family,
and every certificate.  What is useful beyond the present signature is the
organization.  It separates three tasks that are often entangled: enumerating the possible algebraic
states, transporting a state to a curve where a finitely generated group
acts, and testing the remaining local states with analytic functions.  It
also makes the proof modular: a computational component can be replaced by an
independent implementation without changing the surrounding argument, while
the finite interfaces remain explicit.

\appendix

\section{The plane quartic in the pure septic case}\label{app:Cplus}

Let $K_7=\Q(s)$ with $s^2=-7$.  In the monomial order
\[
 x^4,x^3y,x^3z,x^2y^2,x^2yz,x^2z^2,xy^3,xy^2z,xyz^2,xz^3,
 y^4,y^3z,y^2z^2,yz^3,z^4,
\]
write the coefficient of each monomial as $(U+sW)/2$.  The pairs $(U,W)$
for $C_+$ are recorded below with the monomial repeated on every row, so
that each coefficient can be quoted and checked independently of the
surrounding order.
\begin{landscape}
\small
\renewcommand{\arraystretch}{1.22}
\begin{longtable}{@{}c>{\raggedleft\arraybackslash}p{82mm}>{\raggedleft\arraybackslash}p{82mm}@{}}
\caption{Exact coefficients of the plane quartic $C_+$.  The coefficient
of $m$ is $(U+sW)/2$.}\label{tab:cplus-quartic-coefficients}\\
\toprule
monomial $m$ & $U$ & $W$\\
\midrule
\endfirsthead
\multicolumn{3}{c}{\tablename\ \thetable\ (continued)}\\
\toprule
monomial $m$ & $U$ & $W$\\
\midrule
\endhead
$x^4$       & $526589143431946875/69798278$   & $-114868376351859375/69798278$\\
$x^3y$      & $-1271586888400500/3172649$     & $-870445613016000/3172649$\\
$x^3z$      & $72385949750625/34899139$        & $-402884946883125/34899139$\\
$x^2y^2$    & $-666163656476955/34899139$      & $264021753009735/34899139$\\
$x^2yz$     & $-74662865230275/34899139$       & $45051674372775/34899139$\\
$x^2z^2$    & $-20453292214425/69798278$       & $1113321269025/69798278$\\
$xy^3$      & $77961057423858/174495695$       & $1554848756622/34899139$\\
$xy^2z$     & $5974117172919/34899139$         & $534487214709/34899139$\\
$xyz^2$     & $251941629933/34899139$          & $209684218779/34899139$\\
$xz^3$      & $-13800316635/69798278$          & $9656077725/69798278$\\
$y^4$       & $-11467163171457/8724784750$     & $-8293918213539/8724784750$\\
$y^3z$      & $-192270890133/174495695$        & $-111061125567/174495695$\\
$y^2z^2$    & $-4669897617/31726490$           & $-23997395583/158632450$\\
$yz^3$      & $4101243048/174495695$           & $-629073270/34899139$\\
$z^4$       & $2$                              & $0$\\
\bottomrule
\end{longtable}
\end{landscape}
The equation and its three first partial derivatives have no common
projective zero over $K_7$; this is checked by exact elimination.  As an
independent good-reduction certificate, both reductions at the primes above
$23$ are smooth plane quartics.  Thus $C_+$ is smooth, nonhyperelliptic, and
has genus $3$.  The two reductions and their common zeta numerator are
recorded in Appendix~\ref{app:p23-data}.
The five points in Theorem~\ref{thm:rational-t} are
\begin{align*}
P_1={}&[-113-139s:32925+11475s:-138600-57960s],\\
P_2={}&[1109-305s:4325-19625s:163800-37800s],\\
P_3={}&[-1+s:50:0],\\
P_4={}&[5+s:400:0],\\
P_5={}&[-157167-154941s:-82225175+2993875s:186832800].
\end{align*}
The homogeneous forms $G_t$ and $H_t$ have degree $5$.  For each monomial
$m=x^ay^bz^c$ of degree $5$, write
\[
 [m]G_t=\frac{U_{G,m}+sW_{G,m}}2,
 \qquad
 [m]H_t=\frac{U_{H,m}+sW_{H,m}}2.
\]
Table~\ref{tab:t-parameter-coefficients} gives the four rational components
for every monomial; a component is displayed as separate numerator and
positive denominator.  These data define the parameter completely, with no
external coefficient file required.

\newcommand{\ExactLedgerBare}{%
  \begin{center}%
  \begin{tabular}{@{}cc>{\raggedright\arraybackslash}p{170mm}@{}}}
\newcommand{\ExactLedgerHead}{%
  \ExactLedgerBare
  \toprule
  monomial & component & rational value\\
  \midrule}
\newcommand{\ExactLedgerTail}{%
  \bottomrule
  \end{tabular}%
  \end{center}}
\newcommand{\ExactLedgerNext}{%
  \ExactLedgerTail
  \end{landscape}
  \begin{landscape}
  \small
  \renewcommand{\arraystretch}{2.5}
  \begin{center}\tablename\ \ref{tab:t-parameter-coefficients} (continued)\end{center}
  \ExactLedgerHead}
\begin{landscape}
\small
\renewcommand{\arraystretch}{2.5}
\begin{center}
\captionof{table}{Exact coefficient ledger for the inverse parameter $t=-G_t/H_t$.
For each monomial and each form, the two successive rows are the rational
components $U$ and $W$ of the coefficient $(U+sW)/2$.}
\label{tab:t-parameter-coefficients}
\end{center}
\ExactLedgerHead
\multicolumn{3}{c}{$G_t$}\\
$x^{5}$ & $U$ & $\frac{\mathtt{8724124647223604676837903272778053868067651476251979316420946130788203125}}{\mathtt{1262470913738019638514233667224982865889496755254608725372117056}}$\\
$x^{5}$ & $W$ & $\frac{\mathtt{104539127651167761178849503246013584932810386714540106341619861276432421875}}{\mathtt{34086714670926530239884309015074537379016412391874435585047160512}}$\\
$x^{4}y$ & $U$ & $\frac{\mathtt{160516210802709714993563463849330968569737125845236059453160939760278125}}{\mathtt{516465373801917124846731954773856626954794127149612660379502432}}$\\
$x^{4}y$ & $W$ & $\frac{\mathtt{-1516029210727323321897909052657899390133429038437597834516845689254996875}}{\mathtt{4648188364217254123620587592964709642593147144346513943415521888}}$\\
$x^{4}z$ & $U$ & $\frac{\mathtt{-3431261016436648118361864834749807161605723145608327043665883055988296875}}{\mathtt{536865756067092851278177866987423963719508495172022360464492778064}}$\\
$x^{4}z$ & $W$ & $\frac{\mathtt{-14405986606016247091870808274044492866667442818955446375070000269331640625}}{\mathtt{536865756067092851278177866987423963719508495172022360464492778064}}$\\
$x^{3}y^{2}$ & $U$ & $\frac{\mathtt{-156765051716182227120181782804196796183006994771851219874584685103262375}}{\mathtt{6391259000798724419978307940326475758565577323476456672196342596}}$\\
$x^{3}y^{2}$ & $W$ & $\frac{\mathtt{-2797425838906852452662912431708042341593603601309085420184135364665875}}{\mathtt{3195629500399362209989153970163237879282788661738228336098171298}}$\\
\ExactLedgerNext
$x^{3}yz$ & $U$ & $\frac{\mathtt{-2971418364998561881697119480380162120182637893006270782575715726325204375}}{\mathtt{805298634100639276917266800481135945579262742758033540696739167096}}$\\
$x^{3}yz$ & $W$ & $\frac{\mathtt{581935828601695609265701070325316097436069202977166807199027638486990625}}{\mathtt{805298634100639276917266800481135945579262742758033540696739167096}}$\\
$x^{3}z^{2}$ & $U$ & $\frac{\mathtt{-1101411663992169427898989261604416930376364487448681819065769508346053125}}{\mathtt{11274180877408949876841735206735903238109678398612469569754348339344}}$\\
$x^{3}z^{2}$ & $W$ & $\frac{\mathtt{-107499314624920129138097362917410399320732876511214060737904168396284375}}{\mathtt{3758060292469649958947245068911967746036559466204156523251449446448}}$\\
$x^{2}y^{3}$ & $U$ & $\frac{\mathtt{456938855157174881485528038353970608056813975973606988967508412987185}}{\mathtt{4260839333865816279985538626884317172377051548984304448130895064}}$\\
$x^{2}y^{3}$ & $W$ & $\frac{\mathtt{1661711353625849642579873501624741967233922652605715624836234484805605}}{\mathtt{12782518001597448839956615880652951517131154646952913344392685192}}$\\
$x^{2}y^{2}z$ & $U$ & $\frac{\mathtt{8474129667022123046619324483315222687658910122135070504455178101736425}}{\mathtt{115042662014377039559609542925876563654180391822576220099534166728}}$\\
$x^{2}y^{2}z$ & $W$ & $\frac{\mathtt{10719261028668655910991493734095990759234174142059301575470250588670875}}{\mathtt{268432878033546425639088933493711981859754247586011180232246389032}}$\\
\ExactLedgerNext
$x^{2}yz^{2}$ & $U$ & $\frac{\mathtt{4788522504016908727437175426249647578416822156275084241334728620014875}}{\mathtt{16911271316113424815262602810103854857164517597918704354631522509016}}$\\
$x^{2}yz^{2}$ & $W$ & $\frac{\mathtt{5591416257739747140557321332572949584770401809552651792943380516721625}}{\mathtt{1879030146234824979473622534455983873018279733102078261625724723224}}$\\
$x^{2}z^{3}$ & $U$ & $\frac{\mathtt{-673613683718293366731089090657452594859133231945568100511338992953086875}}{\mathtt{4261640371660583053446175908146171424005458434675513497367143672272032}}$\\
$x^{2}z^{3}$ & $W$ & $\frac{\mathtt{727154975549102369129278091212463518913101596132005437190890995061885625}}{\mathtt{4261640371660583053446175908146171424005458434675513497367143672272032}}$\\
$xy^{4}$ & $U$ & $\frac{\mathtt{181934888585282020021801499997708888564901966896834717514388222336671}}{\mathtt{127825180015974488399566158806529515171311546469529133443926851920}}$\\
$xy^{4}$ & $W$ & $\frac{\mathtt{-66576524945702734300367034318330080309880903641645008587735355126139}}{\mathtt{127825180015974488399566158806529515171311546469529133443926851920}}$\\
$xy^{3}z$ & $U$ & $\frac{\mathtt{55752557283144284245053396819690398445556264921670780430266851605377}}{\mathtt{134216439016773212819544466746855990929877123793005590116123194516}}$\\
$xy^{3}z$ & $W$ & $\frac{\mathtt{-154753525348486993811829134912142488930864130682803774941942077141595}}{\mathtt{402649317050319638458633400240567972789631371379016770348369583548}}$\\
\ExactLedgerNext
$xy^{2}z^{2}$ & $U$ & $\frac{\mathtt{45466049893188675406917805349188754066362340943964804016941934609315}}{\mathtt{1537388301464856801387509346373077714287683417992609486784683864456}}$\\
$xy^{2}z^{2}$ & $W$ & $\frac{\mathtt{-90644573939421031699724276084922394905211688255008129426169575178615}}{\mathtt{1537388301464856801387509346373077714287683417992609486784683864456}}$\\
$xyz^{3}$ & $U$ & $\frac{\mathtt{86176315804162437100883836603358200621380221474103257474111818725307525}}{\mathtt{6392460557490874580169263862219257136008187652013270246050715508408048}}$\\
$xyz^{3}$ & $W$ & $\frac{\mathtt{-14901625678223850735415766370606467751263679923368054621027598776871975}}{\mathtt{6392460557490874580169263862219257136008187652013270246050715508408048}}$\\
$xz^{4}$ & $U$ & $\mathtt{0}$\\
$xz^{4}$ & $W$ & $\mathtt{0}$\\
$y^{5}$ & $U$ & $\frac{\mathtt{-50208669906374986341132909015146131126007030919811841716909}}{\mathtt{18313514842869689191983526987088351258653049645369350436400}}$\\
$y^{5}$ & $W$ & $\frac{\mathtt{-118965057882822154213596381597899423461490949136300188722323}}{\mathtt{91567574214348445959917634935441756293265248226846752182000}}$\\
\ExactLedgerNext
$y^{4}z$ & $U$ & $\frac{\mathtt{-317869767610131557388806079392661467805959663406188675965981}}{\mathtt{164821633585827202727851742883795161327877446808324153927600}}$\\
$y^{4}z$ & $W$ & $\frac{\mathtt{-20431957573101268107747459729242315992574863384255199500651}}{\mathtt{32964326717165440545570348576759032265575489361664830785520}}$\\
$y^{3}z^{2}$ & $U$ & $\frac{\mathtt{-90218411306089523014211143406416603450262335284699082090763}}{\mathtt{346125430530237125728488660055969838788542638297480723247960}}$\\
$y^{3}z^{2}$ & $W$ & $\frac{\mathtt{-6384772760573948911903427720751451666967418352649934952949}}{\mathtt{69225086106047425145697732011193967757708527659496144649592}}$\\
$y^{2}z^{3}$ & $U$ & $\frac{\mathtt{34563607476891939880909708193414566015508357013210019043845}}{\mathtt{13083541274042963352536871350115659906206911727644771338772888}}$\\
$y^{2}z^{3}$ & $W$ & $\frac{\mathtt{-150431184828356847118153754501573195221720737251858300004385}}{\mathtt{13083541274042963352536871350115659906206911727644771338772888}}$\\
$yz^{4}$ & $U$ & $\mathtt{0}$\\
$yz^{4}$ & $W$ & $\mathtt{0}$\\
$z^{5}$ & $U$ & $\mathtt{0}$\\
$z^{5}$ & $W$ & $\mathtt{0}$\\
\ExactLedgerTail
\end{landscape}
\begin{landscape}
\small
\renewcommand{\arraystretch}{2.5}
\ExactLedgerBare
\multicolumn{3}{c}{\tablename\ \ref{tab:t-parameter-coefficients} (continued)}\\
\toprule
monomial & component & rational value\\
\midrule
\multicolumn{3}{c}{$H_t$}\\
$x^{5}$ & $U$ & $\frac{\mathtt{417540987588276327046139642388996781228921135238823876465270116076953125}}{\mathtt{55097060351713141551537137416607549642806701064452212691347781}}$\\
$x^{5}$ & $W$ & $\frac{\mathtt{271400501016398695076663568050301774206517949977640898218621509239453125}}{\mathtt{55097060351713141551537137416607549642806701064452212691347781}}$\\
$x^{4}y$ & $U$ & $\frac{\mathtt{40800518279237110617651643358322019052974596058651349157620400597734375}}{\mathtt{40070589346700466582936099939350945194768509865056154684616568}}$\\
$x^{4}y$ & $W$ & $\frac{\mathtt{-10475518577744892148533246091047638248959642317009125512752949147203125}}{\mathtt{40070589346700466582936099939350945194768509865056154684616568}}$\\
$x^{4}z$ & $U$ & $\frac{\mathtt{295466106564310819299307055245131237369729837129047525997756150038046875}}{\mathtt{13884459208631711670987358628985102509987288668241957598219640812}}$\\
$x^{4}z$ & $W$ & $\frac{\mathtt{-97401726931434571033238656526295780061095710676931455291585333023515625}}{\mathtt{13884459208631711670987358628985102509987288668241957598219640812}}$\\
$x^{3}y^{2}$ & $U$ & $\frac{\mathtt{-2408541684482196326060738206798781548127364231879824910739247944739375}}{\mathtt{73462747135617522068716183222143399523742268085936283588463708}}$\\
$x^{3}y^{2}$ & $W$ & $\frac{\mathtt{-1401617439695752490670177104164771958374512006836961227347342807905625}}{\mathtt{220388241406852566206148549666430198571226804257808850765391124}}$\\
\ExactLedgerNext
$x^{3}yz$ & $U$ & $\frac{\mathtt{-88727646196045391066194147279337983419623095543345635287685584174321875}}{\mathtt{13884459208631711670987358628985102509987288668241957598219640812}}$\\
$x^{3}yz$ & $W$ & $\frac{\mathtt{-294670882430653452222330070241389024327059653868933829299253113096875}}{\mathtt{661164724220557698618445648999290595713680412773426552296173372}}$\\
$x^{3}z^{2}$ & $U$ & $\frac{\mathtt{-143378643984727743099590687262597995420899449430994725195546663512734375}}{\mathtt{1166294573525063780362938124834748610838932248132324438250449828208}}$\\
$x^{3}z^{2}$ & $W$ & $\frac{\mathtt{-18365708170095511831980056438251994847190327220498321285191544780928125}}{\mathtt{166613510503580540051848303547821230119847464018903491178635689744}}$\\
$x^{2}y^{3}$ & $U$ & $\frac{\mathtt{8623734371967307823196434999403662938951554222083721534345295726825}}{\mathtt{110194120703426283103074274833215099285613402128904425382695562}}$\\
$x^{2}y^{3}$ & $W$ & $\frac{\mathtt{18881132466481791978253130146174121549567643122623579546749820507925}}{\mathtt{110194120703426283103074274833215099285613402128904425382695562}}$\\
$x^{2}y^{2}z$ & $U$ & $\frac{\mathtt{28562333150244501488893362369966137889574552762163770883392632257375}}{\mathtt{661164724220557698618445648999290595713680412773426552296173372}}$\\
$x^{2}y^{2}z$ & $W$ & $\frac{\mathtt{127302955155020767473662087940112338634670901473338060096039562505625}}{\mathtt{1983494172661673095855336946997871787141041238320279656888520116}}$\\
\ExactLedgerNext
$x^{2}yz^{2}$ & $U$ & $\frac{\mathtt{111047168050090146883232536414177868796628159548247958136634257244625}}{\mathtt{583147286762531890181469062417374305419466124066162219125224914104}}$\\
$x^{2}yz^{2}$ & $W$ & $\frac{\mathtt{808931698949954129585743924035840186651521792034283998632202423247375}}{\mathtt{194382428920843963393823020805791435139822041355387406375074971368}}$\\
$x^{2}z^{3}$ & $U$ & $\frac{\mathtt{56671479059928976818861058588328919604601251570358478252116683982508125}}{\mathtt{440859348792474108977190611187534974897116389794018637658670035062624}}$\\
$x^{2}z^{3}$ & $W$ & $\frac{\mathtt{71221107904627237798651209022651158397866104168849046635811093612975625}}{\mathtt{440859348792474108977190611187534974897116389794018637658670035062624}}$\\
$xy^{4}$ & $U$ & $\frac{\mathtt{15345599112192921388150576223890339236281253816603221545634911025}}{\mathtt{18365686783904380517179045805535849880935567021484070897115927}}$\\
$xy^{4}$ & $W$ & $\frac{\mathtt{-32203376966478498296624374226292138386494075379549015420791430453}}{\mathtt{55097060351713141551537137416607549642806701064452212691347781}}$\\
$xy^{3}z$ & $U$ & $\frac{\mathtt{3144967508609794715796497440988774261265066246157112647816548077300}}{\mathtt{3471114802157927917746839657246275627496822167060489399554910203}}$\\
$xy^{3}z$ & $W$ & $\frac{\mathtt{-1516919058667068918402436722904874293272390262093914022875222741380}}{\mathtt{3471114802157927917746839657246275627496822167060489399554910203}}$\\
\ExactLedgerNext
$xy^{2}z^{2}$ & $U$ & $\frac{\mathtt{1438698269678151371696913059361694896892235546906064110725007289965}}{\mathtt{6626673713210589661153057527470162561584842318933661580968464933}}$\\
$xy^{2}z^{2}$ & $W$ & $\frac{\mathtt{-80096480704524798441566620354536123544154597802973268379738357665}}{\mathtt{946667673315798523021865361067166080226406045561951654424066419}}$\\
$xyz^{3}$ & $U$ & $\frac{\mathtt{3102145252841255952780604187946926413931338657791125868394786111962775}}{\mathtt{220429674396237054488595305593767487448558194897009318829335017531312}}$\\
$xyz^{3}$ & $W$ & $\frac{\mathtt{63124143098168155026060265986978624933421514365737963701974316645275}}{\mathtt{220429674396237054488595305593767487448558194897009318829335017531312}}$\\
$xz^{4}$ & $U$ & $\mathtt{0}$\\
$xz^{4}$ & $W$ & $\mathtt{0}$\\
$y^{5}$ & $U$ & $\frac{\mathtt{186871683168118003488935611555116889880084010}}{\mathtt{32767303919729427758399266748957493087161279}}$\\
$y^{5}$ & $W$ & $\frac{\mathtt{25444070085364078630942054472628302788437558}}{\mathtt{32767303919729427758399266748957493087161279}}$\\
\ExactLedgerNext
$y^{4}z$ & $U$ & $\mathtt{2}$\\
$y^{4}z$ & $W$ & $\mathtt{0}$\\
$y^{3}z^{2}$ & $U$ & $\mathtt{0}$\\
$y^{3}z^{2}$ & $W$ & $\mathtt{0}$\\
$y^{2}z^{3}$ & $U$ & $\mathtt{0}$\\
$y^{2}z^{3}$ & $W$ & $\mathtt{0}$\\
$yz^{4}$ & $U$ & $\mathtt{0}$\\
$yz^{4}$ & $W$ & $\mathtt{0}$\\
$z^{5}$ & $U$ & $\mathtt{0}$\\
$z^{5}$ & $W$ & $\mathtt{0}$\\
\ExactLedgerTail
\end{landscape}

The exact canonical-map identity on $C_+$ is
\begin{equation}\label{eq:t-inverse-identity}
                         G_t+tH_t=0,
\end{equation}
and therefore $t=-G_t/H_t$ wherever $H_t\ne0$.
This rational function extends uniquely to a morphism
$C_+\to\mathbf P^1$, since $C_+$ is smooth and projective.
At a common zero of $G_t,H_t$, the value is obtained after cancelling
their common vanishing in a local parameter; it cannot be read from
the displayed projective pair alone.  The finite sieve retains such
common-zero reductions conservatively.  The certificate verifies
\eqref{eq:t-inverse-identity}; the five reference points are not common
zeros, and direct substitution gives
$t(P_1)=0$, $t(P_2)=1$, and $t(P_3)=t(P_4)=t(P_5)=\infty$.
Local expansions on the smooth quartic give
\[
 \ord_{P_1}(t)=5,\qquad \ord_{P_2}(t-1)=3,\qquad
 \ord_{P_3}(1/t)=\ord_{P_4}(1/t)=7,\qquad \ord_{P_5}(1/t)=1.
\]
In particular, $P_5$ is an unramified point above the branch value
$\infty$.

\section{Integral coordinates for the genus-two formal group}
\label{app:flynn}

On the divisor-pair chart, write
$\alpha_1=x_1+x_2$ and $\alpha_2=x_1x_2$.  The unnormalized rational
coordinate functions in Flynn's projective embedding include
\[
 a_{15}=1,\quad a_{14}=\alpha_1,\quad a_{13}=\alpha_2,\quad
 a_{12}=\alpha_1^2-2\alpha_2,\quad
 a_{11}=\alpha_1\alpha_2,\quad a_{10}=\alpha_2^2.
\]
These formulas use the representative with $a_{15}=1$; they do not
normalize $a_0$ to $1$.  Near the projective origin, the normalized
coordinates are instead $s_i=a_i/a_0$.  In particular,
$a_0a_{15}=a_5^2$, so $s_{15}=s_5^2$ and $s_{15}$ vanishes at the origin.

The computations use the following integral coordinate block:
\[
 b_i=a_i\quad(0\leq i\leq11),\qquad
 b_{12}=a_{13},\quad b_{13}=a_{14},\quad b_{14}=a_{15},\quad
 b_{15}=a_{12}-2a_{13}.
\]
Its inverse is $a_{12}=b_{15}+2b_{12}$, with all other displayed
coordinates recovered directly.  Hence this is an integral unimodular
change of basis.  Membership in the depth-two origin neighbourhood is
therefore equivalent in the $a$- and $b$-coordinates.  The valuation rows
in \eqref{eq:section7-valuations} are stated in the $b$-coordinates.

\section{The split-prime data at 23 for the rational-parameter theorem}
\label{app:p23-data}

The prime $23$ splits in $K_7$ as the two embeddings $s\mapsto4$ and
$s\mapsto19$.  Both reductions of $C_+$ have $20$ rational points.  Their
common local numerator of the zeta function is
\[
 1-4T+4T^2+56T^3+92T^4-2116T^5+12167T^6,
\]
so
\[
                      \#J_+(\F_{23})=10200,\qquad 23\nmid10200.
\]
The ordered reductions of the five points are
\[
\begin{array}{c|cc}
&s=4&s=19\\ \hline
P_1&(1,21,1)&(1,11,15)\\
P_2&(1,0,22)&(1,8,14)\\
P_3&(1,9,0)&(1,13,0)\\
P_4&(1,1,0)&(1,9,0)\\
P_5&(1,11,21)&(1,6,9).
\end{array}
\]
For the regular differential basis $\eta_x,\eta_y,\eta_z$, the corresponding
evaluation vectors are
\[
\begin{array}{c|cc}
&s=4&s=19\\ \hline
P_1&(3,17,3)&(11,6,4)\\
P_2&(4,0,19)&(5,17,1)\\
P_3&(4,13,0)&(2,3,0)\\
P_4&(12,12,0)&(22,14,0)\\
P_5&(7,8,9)&(10,14,21).
\end{array}
\]
These data independently reproduce the tangent determinants in
\eqref{eq:siksek-dets} once the two annihilator rows are applied.

\section{The complete rank-four descent and its downstream use}
\label{app:rank-proof}

Let $K=K_7=\Q(s)$, where $s^2=-7$, and let
$J=\operatorname{Jac}(C_+)$ for the smooth plane quartic of
Appendix~\ref{app:Cplus}.  Write $D_i=[P_i-P_1]$ and
\[
 H_0=\langle D_2,D_3,D_4,D_5\rangle,\qquad
 H_1=\langle D_2,E,D_4,D_5\rangle.
\]
We prove that $J(K)$ has rank four, that $H_0$ has odd index, and that
$[J(K):H_1]$ is coprime to $400$.  These are the group-theoretic inputs
to the sieve and logarithmic argument in Theorem~\ref{thm:rational-t}.
The relation \eqref{eq:Erelation} gives $H_0\subseteq H_1$; the
$2$-saturation statement concerns $H_0$, and the $5$-saturation statement
concerns $H_1$.

\subsection{The descent maps and the local completeness criterion}

Let $\Delta$ be the Galois set of the $28$ bitangents of $C_+$, and let
$L=(\overline K^{\Delta})^{\operatorname{Gal}(\overline K/K)}$
be its degree-$28$ \'etale algebra of Galois-equivariant functions.  Evaluation of
the normalized bitangent lines on a degree-zero divisor $z$ gives
$f(z)\in L^\times$, whenever the divisor avoids their zeroes and poles.
For $F=K$ or a completion $K_v$, put
\[
 \mathcal H(F)=
 (L\otimes_K F)^\times\big/
       \bigl(F^\times(L\otimes_K F)^{\times2}\bigr).
\]
The evaluation construction induces the fake descent map
\[
 f_F:J(F)/2J(F)\longrightarrow\mathcal H(F).
\]
Thus a divisor class is in the \emph{fake kernel} precisely when its
evaluation can be written $f(z)=a\lambda^2$, with $a\in F^\times$
and $\lambda\in(L\otimes_K F)^\times$.

For clarity, the finite modules in the comparison with true descent are
as follows.  Let $E_\Delta\subset\F_2^\Delta$ be the subspace of even subsets,
and let $R_\Delta\subset E_\Delta$ be the span of the $315$ syzygetic tetrads.
Write $E_\Delta^\vee,R_\Delta^\vee$ for their duals and
$q:E_\Delta^\vee\to R_\Delta^\vee$ for restriction.  The exact bitangent and
contact identities identify
\[
 \ker q\simeq J[2],\qquad \dim E_\Delta=27,\qquad \dim R_\Delta=21,
\]
as Galois modules, by \cite[Corollary~12.5]{BPS}.  The rational even
theta characteristic and the seven blocks in the exact field tower put
the actual Galois action inside $\operatorname{AGL}_3(\F_2)$, acting
transitively on $\Delta$.  Equality with the full affine group is not
needed.  The rational point $P_1$ supplies the degree-one divisor
hypotheses for the global and local comparison results of
\cite[Lemma~10.2 and Appendix~A]{BPS}.

If $M$ is one of these finite Galois modules, $M(F)$ denotes its
$F$-rational elements.  Define the correction quotient
\[
 \mathcal C(F)=R_\Delta^\vee(F)/q\bigl(E_\Delta^\vee(F)\bigr).
\]
The tetrad incidence map $\tau_*$ multiplies the four corresponding
bitangent values.  The normalized contact conics give an evaluation
$r(z)$ satisfying the exact identity
\[
                         r(z)^2=\tau_*(f(z)).
\]
Consequently, if $f(z)=a\lambda^2$, the correction of $z$ is represented by
\begin{equation}\label{eq:rank-kernel-correction}
             \frac{r(z)}{a^2\tau_*(\lambda)}.
\end{equation}
The scalar exponent is two because each tetrad has four entries.
Formula \eqref{eq:rank-kernel-correction} is the identity of
\cite[Lemma~A.24(a)]{BPS}.  Its entries lie in $\mu_2$ before any
local approximation is made.  A nonzero value under a character of
$\mathcal C(F)$ therefore proves that the class of $z$ is nonzero in
$\ker f_F$.

\begin{lemma}[Local completeness]\label{lem:rank-local-completeness}
Let $d_v=\dim_{\F_2}J(K_v)/2J(K_v)$.  Suppose actual local divisor
classes give $r_v$ independent fake images and $k_v$ independent classes
in the fake kernel.  If $r_v+k_v=d_v$, these images span the full fake
image and the fake kernel has dimension $k_v$.
\end{lemma}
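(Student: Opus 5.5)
The plan is to read the statement as pure linear algebra over $\F_2$, applied to the local fake descent map $f_v=f_{K_v}$ and the finite-dimensional space $J(K_v)/2J(K_v)$ of dimension $d_v$. By construction $f_v$ is a homomorphism $J(K_v)/2J(K_v)\to\mathcal H(K_v)$ of $\F_2$-vector spaces. Rank--nullity then gives
\[
 \dim\operatorname{im} f_v+\dim\ker f_v=d_v .
\]
The hypotheses supply two families of actual local classes. The first consists of classes $z_1,\dots,z_{r_v}$ whose images $f_v(z_i)$ are independent in $\mathcal H(K_v)$, so $\dim\operatorname{im} f_v\ge r_v$. The second consists of classes $w_1,\dots,w_{k_v}$ lying in $\ker f_v$ and independent there, which forces them to be independent in $J(K_v)/2J(K_v)$; hence $\dim\ker f_v\ge k_v$.

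Combining these lower bounds with rank--nullity and the hypothesis $r_v+k_v=d_v$ squeezes both inequalities into equalities. Thus $\dim\operatorname{im} f_v=r_v$, so the $r_v$ independent images span the full fake image, and $\dim\ker f_v=k_v$, so the $w_j$ form a basis of the fake kernel. As a by-product, the $z_i$ together with the $w_j$ form a basis of $J(K_v)/2J(K_v)$. The reason is that $\sum a_iz_i+\sum b_jw_j=0$ implies $\sum a_if_v(z_i)=0$, hence all $a_i=0$, and then all $b_j=0$.

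The one point needing care is what ``independent classes in the fake kernel'' means operationally. It has to mean independence in $J(K_v)/2J(K_v)$, not merely nonvanishing of individual classes. I would certify this through the correction characters of $\mathcal C(K_v)$. Formula \eqref{eq:rank-kernel-correction}, the identity of \cite[Lemma~A.24(a)]{BPS}, defines a homomorphism from $\ker f_v$ to $\mathcal C(K_v)$, with values in $\mu_2$ before any approximation. Suppose the $k_v\times(\text{number of characters})$ matrix of character values on the $w_j$ has rank $k_v$ over $\F_2$. Then no nontrivial combination of the $w_j$ maps to zero, so none is zero in $J(K_v)/2J(K_v)$, and the $w_j$ are independent.

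I expect this to be the main, though modest, obstacle. It requires the correction map to be well defined and additive on $\ker f_v$, independent of the choice of $a$ and $\lambda$, which is exactly what the cited BPS lemma guarantees. The value of $d_v$ is taken as known from the standard formula: $d_v=\dim J(K_v)[2]$ plus $3[K_v:\Q_2]$ when $v\mid2$, and $d_v=\dim J(K_v)[2]$ otherwise.
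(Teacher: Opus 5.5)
Your proposal is correct and is essentially the paper's proof: apply rank--nullity to $f_{K_v}$ on the $d_v$-dimensional space $J(K_v)/2J(K_v)$, and note that the exhibited image classes and kernel classes give lower bounds summing to $d_v$, with $d_v$ taken from \cite[Remark~11.6]{BPS}. Your extra paragraph, which certifies independence of the kernel classes through correction characters of $\mathcal C(K_v)$, is a sound account of how the hypothesis is verified, but the lemma does not need it; in the paper's applications $k_v\le 1$, so a single nonzero correction value suffices.
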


\begin{proof}
Apply rank--nullity to $f_{K_v}$.  The two exhibited subspaces give
lower bounds whose sum already equals the dimension of its domain.
For a finite place of odd residue characteristic,
$d_v=\dim J[2](K_v)$; for $v\mid2$,
$d_v=3[K_v:\Q_2]+\dim J[2](K_v)$, by
\cite[Remark~11.6]{BPS}.  This is the stopping criterion of
\cite[Lemma~A.24(b)]{BPS}.
\end{proof}

\subsection{A containing space for the global fake image}

Let $S$ consist of the complex place, both dyadic places, the places above
$3,5,7$, and the bad place $(29,s-15)$.  The primitive quartic is
smooth outside $S$, and the bitangent algebra is unramified there.
The base field $K$ has class number one.  The containing direction of
\cite[Propositions~7.2--7.3 and Theorem~10.9(a)]{BPS} therefore puts
every relevant fake Selmer class in the image of $L(S,2)$, the
squareclasses with even valuation outside the primes above $S$.

The class-group input is the following lemma; the earlier text
compressed it into one sentence, and its full chain is recorded in the
companion as indicated.

\begin{lemma}\label{lem:cl2}
$\operatorname{Cl}(L)[2]=0$.
\end{lemma}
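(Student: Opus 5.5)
We aim to show that no nontrivial $2$-torsion ideal class exists in any component field of $L$. First we decompose the bitangent algebra into its field components. The Galois action on $\Delta$ lies in $\operatorname{AGL}_3(\F_2)$ and is transitive, so $L$ is a single field of degree $28$ over $K$, hence of degree $56$ over $\Q$. The seven blocks give an exact tower $K\subset L_7\subset L$ with $[L_7:K]=7$ and $[L:L_7]=4$. Since $\operatorname{Cl}(L)[2]$ is a direct sum over components, it suffices to treat this one field. We would produce an explicit defining polynomial of $L$ over $K$ by exact elimination from the quartic of Appendix~\ref{app:Cplus}, namely the degree-$28$ factor of the bitangent resolvent. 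We would also record its relative discriminant, checking that ramification occurs only above $S$.

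We would then bound $\operatorname{Cl}(L)[2]$ in two independent ways.

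\textbf{Route through the tower.} Work up the tower, starting with $L_7$. Compute its class group unconditionally, either by Minkowski's bound or by a GRH-free certification of the relation lattice, and confirm that its order is odd. For the quartic step $L/L_7$, the $\operatorname{AGL}_3$ structure shows that $L$ sits inside the $\F_2^3$-translation part of the closure. Genus theory for $2$-extensions then bounds $\dim\operatorname{Cl}(L)[2]$ by $\dim\operatorname{Cl}(L_7)[2]$ plus a term counting ramified primes minus unit-norm conditions. The unit-norm contribution can be computed explicitly from a fundamental system of units of $L_7$. We would show that this genus term vanishes: the ramified primes in $L/L_7$ above $S$ are few, and local norm symbols of the units of $L_7$ at those primes are independent enough to cancel them.

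\textbf{Direct route.} Alternatively, verify directly that there is no unramified quadratic extension of $L$. By class field theory such an extension corresponds to a class $\alpha\in L^\times/L^{\times2}$ with even valuations at every prime, which lies in $L(\emptyset,2)$, and which becomes a square locally at all dyadic places. One computes the $2$-Selmer group of the empty set with $S$-unit data. Since $L$ has degree $56$, this requires the unit group of $L$, so in practice we use the tower together with the norm-compatibility argument.

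The main obstacle is the unconditional determination of the class group and units of a degree-$56$ field. Standard software produces these only under GRH, and verifying Minkowski's bound at this degree is infeasible. We would therefore try first to avoid computing $\operatorname{Cl}(L)$ in full, and instead to prove $2$-triviality by a $2$-part argument. That argument uses the rigorous class number of the degree-$14$ field $L_7$, a saturation certificate at $2$ for an explicit unit subgroup (checked via quadratic characters at auxiliary split primes, as in the $173$ Kummer argument of Section~\ref{sec:reducible-end}), and the genus formula up the $2$-power steps. Together these are unconditional and only require $2$-saturation, not the full regulator.
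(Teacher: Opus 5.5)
Your tower route fails at the quartic step. The affine structure does not make $L/L_7$ a $2$-extension; it shows the opposite. The stabilizer of a block, that is of a direction $d\in\F_2^3\setminus\{0\}$, acts on the four bitangents of that block (the cosets of $\langle d\rangle$) through $\operatorname{AGL}(\F_2^3/\langle d\rangle)\cong\operatorname{AGL}_2(\F_2)\cong S_4$. The translations act transitively on the block, so $L$ is not fixed by them. The paper records exactly this: $L$ has an $S_4$-type Galois closure over the degree-$14$ field, which is your $L_7$ and the paper's $E$. So $L$ is the fixed field of a point stabilizer $S_3$, which is maximal in $S_4$. Hence $L/L_7$ is non-Galois with no intermediate field, and neither the ambiguous class number formula nor the fixed-point argument for $2$-groups applies to it. Your bound of $\dim\operatorname{Cl}(L)[2]$ by $\dim\operatorname{Cl}(L_7)[2]$ plus a genus term is therefore unjustified. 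Nor can the odd class number of $L_7$ be carried up some other $2$-tower. The nearest genuine $2$-tower sits over the cubic resolvent field $F$ (degree $42$), and an odd-degree step such as $F/L_7$ can create $2$-torsion in the class group.

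What is missing is a way to carry $2$-information across the $S_4$ step, together with an odd-class-number input at the resolvent level. The paper uses the permutation-character identity $1+\operatorname{Ind}_{V}^{S_4}1=\operatorname{Ind}_{S_3}^{S_4}1+\operatorname{Ind}_{D_4}^{S_4}1$, with $V=\langle(12),(34)\rangle$. This gives the Brauer--Kuroda relation $\zeta_E\zeta_N=\zeta_L\zeta_F$, with $N/F$ quadratic, and hence $h(L)=\frac{h(E)h(N)}{h(F)}\cdot\frac{2^{27}}{J}$. Your $2$-saturation idea enters only here, as the computation $v_2(J)=27$ for the unit correspondence. The remaining inputs are as follows. $h(E)=1$ is certified. $h(T)$ is odd for the degree-$21$ field $T$ with $F=T(\sqrt{-7})$, by an unconditional Brueggeman--Doud discriminant bound that excludes unramified quadratic extensions. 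The ambiguous class number formula in the genuine quadratic steps $T\subset F\subset N$, with unit Hilbert-symbol rank $3$ in both, then gives $h(F)$ and $h(N)$ odd. Alternatively, $[LF:L]=3$ lets you inject $\operatorname{Cl}(L)[2]$ into $\operatorname{Cl}(LF)[2]$ and run genus theory up $F\subset N\subset LF$. That route still needs $h(F)$ odd and unit data in degree $84$. Finally, in your direct route the dyadic condition should be that $L_v(\sqrt\alpha)/L_v$ is unramified, not that $\alpha$ is a local square. The latter only detects quadratic extensions split above $2$.
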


\begin{proof}[Proof sketch]
The degree-$56$ field $L$ has an $S_4$-type Galois closure over the
degree-$14$ field $E$; write $F$ for the corresponding degree-$42$
field, $N$ for the degree-$84$ field with $N/F$ quadratic, and $T$ for
the degree-$21$ subfield with $F=T(\sqrt{-7})$.  The Brauer--Kuroda
relation $\zeta_E\zeta_N=\zeta_L\zeta_F$ (all $24$ permutation-character
identities are verified exactly) and the exact regulator comparison
give
\[
 h(L)=\frac{h(E)\,h(N)}{h(F)}\cdot\frac{2^{27}}{J},
\]
where $J$ is the index of the unit correspondence; exact
$2$-saturation of that correspondence gives $v_2(J)=27$.  It therefore
suffices that $h(E)$, $h(F)$ and $h(N)$ are odd.
$h(E)=1$ is certified unconditionally (\texttt{bnfcertify}).
For $T$: any unramified quadratic extension of $T$ split at the real
places has discriminant $d_T^2$ and splits every principal prime of $T$;
the $3{,}216$ certified principal primes of norm at most $30\,000$ enter
the unconditional local-corrected discriminant bound of
Brueggeman--Doud \cite[Theorem~2.4(1)]{BrueggemanDoud}, evaluated with
interval arithmetic, and the bound exceeds $\log d_T^2$ by a margin of
$0.0407$; hence no such extension exists and $h(T)$ is odd.
$F/T$ is ramified at one finite and three real places with unit
Hilbert-symbol rank $3$, and $N/F$ is ramified at four primes with unit
symbol rank $3$; the ambiguous-class formula gives $h(F)$ and then
$h(N)$ odd.
\begingroup\raggedright
The steps are the prior-evidence jobs
\texttt{E\_class\_one}, \texttt{T21\_prime\_principality},
\texttt{T21\_norm\_export}, \texttt{T21\_directed\_Arb\_bound},
\texttt{F\_odd\_class}, \texttt{pair\_ambiguous},
\texttt{J27\_images}, \texttt{J27\_chain}, \texttt{J27\_terminal},
\texttt{S4\_integral\_relation} and \texttt{class\_field\_bindings} of
the companion.  Under \path{evidence/v3/repository/results/}, the
relevant runs are:
\begin{itemize}\small
\item \path{2026-08-26_l2_relative_s4_class_gate}: the acceptance
contract, whose initial status was \texttt{NO\_GO};
\item \path{2026-08-26_t_odd_odlyzko_exact_local_certificate_v1}: the
discriminant certificate;
\item \path{2026-08-26_t_odd_odlyzko_downstream_implication_replay_v1}:
the completed conditions of the contract
(\path{summaries/T_ODD_DOWNSTREAM_IMPLICATION_REPLAY_V1.md},
\path{certificates/downstream_implication_decision_v1.json}).
\end{itemize}
\endgroup
\end{proof}
The $38$ finite primes of $L$ above $S$ are principal, and the unit calculation
gives $28$ independent ordinary-unit squareclasses, including $-1$.
The exact valuation sequence for $S$-unit squareclasses shows that
these $28$ classes and the $38$ prime generators form a basis of the
$66$-dimensional space
\[
                    V_0=L(S,2)\subset L^\times/L^{\times2}.
\]
Thus every fake image of a true $2$-Selmer class has a representative
in $V_0$.
For a selected place $v$, impose on $V_0$ the condition that its
restriction modulo the diagonal $K_v^\times$ belong to
$\operatorname{im}f_{K_v}$.

The local image dimensions used here are
\[
\begin{array}{c|c|c}
 \text{place}&d_v&\dim\operatorname{im} f_{K_v}\\ \hline
 \text{four completely split bitangent places}&6&6\\
 v_{1051}=(1051,s-815)&4&4\\
 v\mid3&2&2\\
 v\mid5&3&3\\
 v\mid7&2&1\\
 v\mid2,\ w\equiv1\pmod2&4&4\\
 v\mid2,\ w\equiv0\pmod2&4&3
\end{array}
\qquad w=(1+s)/2.
\]
At the four good places above $19163,23431,29059,38189$, the bitangent
algebra splits completely.  Although $1051$ also splits in $K/\Q$,
at the selected good place $v_{1051}=(1051,s-815)$ the bitangent algebra
has residue factor degrees $1^8 2^{10}$.  Thus
$L\otimes_K K_{v_{1051}}\simeq K_{v_{1051}}^8\times E_{1051}^{10}$, where
$E_{1051}/K_{v_{1051}}$ is the unramified quadratic extension.  Its Frobenius
action gives $\dim_{\F_2}J[2](K_{v_{1051}})=4$.
For the first four rows, independent point images attain the local
two-torsion bound, so Lemma~\ref{lem:rank-local-completeness} applies with
$k_v=0$.  The $7$-adic and dyadic arguments below justify the remaining
rows, including their kernel contributions.

Let $V\subset V_0$ be the intersection of these conditions.  The
restriction of each of the $66$ generators is computed in every factor
of the actual local bitangent algebra.  Exact linear algebra gives
\begin{equation}\label{eq:rank-dimensions}
 66\longrightarrow23\longrightarrow22\longrightarrow20
 \longrightarrow19\longrightarrow18\longrightarrow15\longrightarrow12.
\end{equation}
The stages are the four completely split bitangent places together,
$v_{1051}$, the places above
$3,5,7$, and the dyadic places with $w\equiv1,0$, in that order.
The other prime conditions and the norm condition in the ancillary
descent calculation are redundant for this intersection.

The diagonal subspace
\[
 D=V\cap\operatorname{im}\bigl(K^\times/K^{\times2}
                        \longrightarrow L^\times/L^{\times2}\bigr)
\]
has dimension seven.  Set $Q=V/D$
and let $B$ be the fake image of $H_0$.  The coordinate calculation gives
\begin{equation}\label{eq:rank-containing-quotient}
 Q=B\oplus\langle c\rangle,\qquad \dim B=4,\qquad \dim Q=5.
\end{equation}
Here $c$ denotes the fixed complementary squareclass in the descent
data; we use the same representative in $L^\times$ throughout the
comparison below.  The calculation verifies that the diagonal,
the four point images, and this representative satisfy every local
condition, have ranks $7,11,12$ successively, and span $V$.
Thus \eqref{eq:rank-containing-quotient} is an equality of the specified
spaces.  In particular, the fake image of every true Selmer class lies
in $Q$.  We do not need every element of $Q$ to be a Selmer class.

\subsection{Completeness at seven}

Let $v\mid7$.  The actual decomposition data give $d_v=2$.
For the divisor $z_7=P_4-P_1$, exact square tests in all seven factors
of $L\otimes_K K_v$ give
\[
                            f(z_7)=3\lambda^2.
\]
This puts its class in the full fake kernel.  To prove it is a nonzero
class, consider the specified completion $A$ of the degree-$42$ tetrad
algebra.  It has $(e,f)=(4,1)$ over $K_v$, and the incidence algebra
over $A$ is a product of two unramified quadratic extensions.
Evaluation on this tetrad component defines a character of
$\mathcal C(K_v)$: this is checked for every affine-group candidate
compatible with the measured decomposition data.  Some of those
characters vanish, so their factor profiles alone do not prove
nonvanishing.

In the two quadratic incidence fields, remove the even valuations from
$f(z_7)/3$ and take square roots of the resulting units.  Their residue
roots over $\F_{49}$ are simple and hence lift by Hensel's lemma.
The two normalized norm residues are one; the corresponding normalized
conic quotient has residue six.  Formula
\eqref{eq:rank-kernel-correction} therefore gives
\[
           \frac{r(z_7)}{3^2\tau_*(\lambda)}=-1
\]
on the selected target.  The equality is exact: the contact identity
puts the quotient in $\mu_2$, and reduction in residue characteristic
seven distinguishes $1$ from $-1$.  Thus $\ker f_{K_v}$ is nonzero.
For comparison, the distinct divisor $P_2-P_1$, also with diagonal
scalar $3$, gives $+1$.  Changing either quadratic square-root sign
does not change its norm.

There is also a nonzero fake image.  Let $K'_v/K_v$ be the unramified
quadratic extension.  The local-point calculation produces
$P\in C_+(K'_v)$ such that the degree-zero divisor
\[
                   \operatorname{Norm}_{K'_v/K_v}(P)-2P_1
\]
has nonzero fake image.  The coordinate change from its local model to
the printed quartic is $Z_{\rm original}=-7Z_{\rm local}$.
A strong Hensel inequality proves existence of the point, and all seven
line-value squareclasses are unchanged by the correction.  Its norm-line
vector, modulo the two diagonal classes, spans the one-dimensional
allowed image used in \eqref{eq:rank-dimensions}.
We have now exhibited an image of dimension at least one and a kernel
of dimension at least one in a domain of dimension two.
Lemma~\ref{lem:rank-local-completeness} proves that both dimensions are
exactly one.

\paragraph{Precision for the $7$-adic calculation.}
The target is represented by an integral Eisenstein polynomial of
degree eight over $\Q_7$.  Strong Hensel bounds and Krasner's lemma
identify the computed generator with the exact target root; Gauss
bounds propagate its error through the rational line, incidence and
conic coefficients, including their denominators.  The two incidence
factors lift from coprime factors.  A uniformizer change in one factor
gives its irreducible residual quadratic.  At working precision $100$
over $\Q_7$, the resulting relative-error bound is at least $254$ in
$A$, so the residue used above is unchanged.  The computed sign
separations at working precisions $100$ and $120$ are $255$ and $511$.
These inequalities justify passage from approximations to the exact
sign, rather than relying on agreement of two numerical outputs.

\subsection{Completeness at the dyadic places}

At both dyadic places $d_v=4$.  At the place with $w\equiv1\pmod2$,
the known point differences together with the Hensel lift of
$[2:46:1]$ give four independent fake images.  This already attains
$d_v$, so the image is complete and its fake kernel is zero.

Now work at the place with $w\equiv0\pmod2$, identified with $\Q_2$.
The four known differences have fake-image rank two.  We describe the
additional points and kernel divisors explicitly.  In the chart
\[
                       [X:Y:Z]=[13+32e:1:16d],
\]
let $C_0=[13:1:16d_0]$, where $d_0\equiv0\pmod2$ is the Hensel
root of the quartic equation.  The normalized equation reduces to
$d(d+1)^3$, so this root is unique in its residue disk.  Its fake image
is independent of the known images, giving rank at least three.

In the chart $[13+32e:1:16(1+2g)]$, the normalized reduction is
\[
       e^4+e^3+g^2e^2+(g^2+g)e+g^3+g^2+g=0.
\]
Its $g$-derivative at $(0,0)$ and its $e$-derivative at $(1,0)$ are
units.  Hence the following points are well defined by their coordinates
and congruences:
\[
 \begin{aligned}
 P'_2&=[77:1:16(1+2g_2)],&g_2&\equiv0\pmod2,\\
 P'_5&=[13+32e_5:1:80],&e_5&\equiv1\pmod2.
 \end{aligned}
\]
Here $g_2$ is obtained by fixing $e=2$, and $e_5$ by fixing $g=2$.
Define the degree-zero divisors
\[
 z_2=P'_2+P_2+C_0-3P_1,\qquad
 z_5=P'_5+P_2-2P_1.
\]
Exact square tests in all three factors of the local bitangent algebra
give
\[
                    f(z_2)=-10\lambda_2^2,\qquad
                    f(z_5)=2\lambda_5^2.
\]
Thus these are actual fake-kernel divisors.  The line-value error bounds
put the ratio of each true value to its tested approximation in
$1+\mathfrak p^{2\ord_{\mathfrak p}(2)+1}$, whose elements are squares
by Hensel's lemma.

The degree-$42$ target splits here into factors of degrees
$3,3,6,6,24$.  Evaluation on the two sextic factors gives two independent
characters of the two-dimensional quotient $\mathcal C(K_v)$.
Both characters annihilate $q(E_\Delta^\vee(K_v))$ for all $28$ compatible
affine-group candidates.  With $-1$ represented by the bit $1$, both
$z_2$ and $z_5$ have correction values $(0,1)$ under these characters.
In particular the fake kernel is nonzero.  The rank-three image and
this kernel class fill the four-dimensional local Kummer group;
Lemma~\ref{lem:rank-local-completeness} proves that the full fake image
has dimension three.  This proves the last two cuts in
\eqref{eq:rank-dimensions}.

\paragraph{Precision for the dyadic correction.}
Integral local bases, ramification and residue polynomials identify the
two sextic fields and their quadratic incidence factors.  Strong
Hensel and Krasner bounds apply to the original target polynomial,
and the factor-lifting error is propagated through the evaluations.
In the strengthened calculation for $z_5$, the unit square-root
residual and derivative valuations are $(40,4)$ in a ramified quadratic
factor and $(20,2)$ in an unramified one.  The corresponding exact-root
error bounds are $36$ and $18$; after taking norms the relative error
in the target is at least $18$.  The final sign-separation pairs are
$(19,2)$ and $(2,18)$; in each sextic target field $A$,
$\ord_A(2)=2$.  The exact contact identity
therefore distinguishes the signs despite residue characteristic two.
Only a valid square-root witness is required: changing its sign has
no effect on a quadratic norm, so uniqueness of the finite root search
is unnecessary.

\subsection{Excluding the complementary class}

We next compare the containing space $Q$ with the true $2$-Selmer group.
In fact, $J(K)[2]=0$ follows already from transitivity on $\Delta$.
Identify $E_\Delta^\vee$ with
$\F_2^\Delta/\langle\mathbf1\rangle$ using the standard dot product.
The bitangent realization gives
$J[2]\simeq R_\Delta^\perp/\langle\mathbf1\rangle$.
The weight enumerator of $R_\Delta^\perp$ is
\[
                         1+63T^{12}+63T^{16}+T^{28};
\]
it is obtained by taking the orthogonal complement of the $315$ tetrad
incidence vectors, which have rank $21$, and enumerating its $128$ vectors.
Each nonzero quotient class therefore has two complementary
representatives of weights $12$ and $16$.  If such a class were Galois
invariant, its unique weight-$12$ representative would have
Galois-invariant support, a nonempty proper subset of the transitive set
$\Delta$.  This is impossible.  Separately, the finite Galois-module
calculation gives $\dim\mathcal C(K)=1$ for every transitive affine
subgroup compatible with the actual bitangent action.

The fixed representative $c$ in \eqref{eq:rank-containing-quotient}
has two coherent global lifts under the BPS comparison map on
$H^1(K,J[2])$.  Here coherence includes the relations required by
\cite[Lemma~A.13(b)]{BPS}, rather than only the existence of square roots
of $\tau_*(c)$.  More explicitly, the degree-$42$ and degree-$56$
target orbits give $98$ incidence vectors spanning $R_\Delta$.  Exact target
norm roots satisfy the $77$ independent relations among those vectors.
Their relation quotients lie in $\mu_2$, so their exact signs are
determined by reduction at the odd split prime $733177$, where all
denominators are units.  This defines the lift on all $315$ tetrads
and respects the Galois action.  The two compatible whole-orbit sign
choices, denoted $00$ and $11$, differ by the nonzero element
$\rho\in\mathcal C(K)$.  Since this quotient has dimension one,
these are all the lifts.

At $v\mid5$ the fake map is injective: the three independent images
already attain $d_v=3$.  Hence the local correction quotient used to
test whether a cohomology class is Kummer is $\mathcal C(K_v)$ itself;
there is no local fake-kernel contribution to quotient out.  Its
dimension is two.  For the actual divisor $z=P_2+P_4-2P_1$ and
$a=(s-1)/2$, the same global representative satisfies
\[
                             f(z)=ca\lambda^2
\]
in all eleven components of $L\otimes_K K_v$.
If $u^2=\tau_*(c)$ is a coherent target root, the local obstruction
is represented by
\[
                         \frac{u\,a^2\tau_*(\lambda)}{r(z)}.
\]
Its square is one by the exact contact and norm identities.  The
physical target characters give
\[
 \omega_{00}=(1,0),\qquad \omega_{11}=(0,1),\qquad
                         \kappa_5(\rho)=(1,1).
\]
Here $\kappa_5$ is localization on the correction quotient; its value
is the difference of the two obstructions, not either obstruction
itself.  The functional $(u,v)\mapsto u+v$ vanishes on that difference
and is nonzero on both obstructions.  Thus neither lift of $c$ is
locally Kummer at $5$.

By \cite[Theorem~10.14 and Lemma~A.26]{BPS}, the kernel of the
true-to-fake Selmer comparison is the kernel of localization from
$\mathcal C(K)$ to the local correction quotients.  Its restriction
at $5$ is already injective, since $\kappa_5(\rho)\ne0$.
The Selmer comparison kernel is consequently zero.

\paragraph{Arithmetic identification of the $5$-adic obstructions.}
The exact coordinate representative of $c$, the target norm roots,
and the local source equation above are compared directly.  In the
selected quadratic incidence fields, taking norms of $\lambda^2$
and prescribing the residue of the norm of $\lambda$ determines the
required square root.  The three physical character values are
$(-1,+1,-1)$; the first and third characters agree, and the first two
are independent.  Reversing the coherent root gives $(+1,-1,+1)$.
The relative root-error bounds are at least $21$, so these residues
determine the exact $\mu_2$ values.  This is the arithmetic underlying
the two displayed obstruction vectors.

\subsection{Rank, subgroup indices and the logarithmic annihilator}

\begin{proposition}
The true $2$-Selmer group and the Mordell--Weil group satisfy
\[
 \dim_{\F_2}\operatorname{Sel}^{2}(J/K)=\rank J(K)=4,
                         \qquad \Sha(J/K)[2]=0.
\]
\end{proposition}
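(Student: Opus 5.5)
The argument is a sandwich between an upper bound for the $2$-Selmer group and a lower bound for the rank. We take the upper bound from the containing-space computation and the comparison with true descent. By \eqref{eq:rank-containing-quotient}, the fake image of every true $2$-Selmer class lies in $Q=B\oplus\langle c\rangle$, where $\dim Q=5$. The preceding subsection shows that neither coherent lift of $c$ is locally Kummer at $5$. Since $B$ is the fake image of the actual point classes $H_0$, adding an element of $B$ does not change the local obstruction. Hence no true Selmer class has fake image in the coset $c+B$, and every true Selmer class has fake image in $B$.

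We then need to pass from fake images back to true classes, using two facts.

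First, the fake map on true Selmer classes has kernel equal to the true-to-fake comparison kernel. By \cite[Theorem~10.14 and Lemma~A.26]{BPS}, this kernel is the kernel of localization from $\mathcal C(K)$ to the local correction quotients. It vanishes because $\dim\mathcal C(K)=1$ and $\kappa_5(\rho)\neq0$.

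Second, the rational point $P_1$ ensures we are in the degree-one setting of \cite[Lemma~10.2 and Appendix~A]{BPS}, so the fake Selmer set is a group. Together these give $\dim_{\F_2}\operatorname{Sel}^2(J/K)\le\dim B=4$.

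For the lower bound, the classes $D_2,\dots,D_5$ have fake images spanning the four-dimensional space $B$, so their images in $J(K)/2J(K)$ are independent. We have $J(K)[2]=0$, which follows from transitivity of the Galois action on $\Delta$ together with the weight-enumerator argument. It gives $\dim J(K)/2J(K)=\rank J(K)$, so $\rank J(K)\ge4$.

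The exact sequence $0\to J(K)/2J(K)\to\operatorname{Sel}^2(J/K)\to\Sha(J/K)[2]\to0$ then yields
\[
4\le\rank J(K)=\dim J(K)/2J(K)\le\dim\operatorname{Sel}^2(J/K)\le4.
\]
So all of these are equalities, and $\Sha(J/K)[2]=0$. As a by-product, $D_2,\dots,D_5$ form a basis of $J(K)/2J(K)$, so $H_0$ has odd index.

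The main obstacle I expect is making the passage from the fake Selmer bound to the true Selmer bound rigorous. We must check three things:
\begin{enumerate}
\item the local conditions imposed in \eqref{eq:rank-dimensions} are exactly the full local fake images; this is the role of Lemma~\ref{lem:rank-local-completeness} at $7$ and at the dyadic places, where the fake kernels are nonzero;
\item the set $S$ suffices for the containing space, which requires $\operatorname{Cl}(L)[2]=0$ from Lemma~\ref{lem:cl2} and class number one for $K$;
\item the coset argument excluding $c+B$ uses the same literal representative $c$ as the containing-space computation.
\end{enumerate}
Once these are in place, the remaining steps are linear algebra over $\F_2$.
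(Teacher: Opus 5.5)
Your proposal is correct and follows the paper's proof essentially step for step. Both arguments use the containing space $Q=B\oplus\langle c\rangle$, exclude the coset $c+B$ by subtracting the Kummer class of a point of $H_0$ and invoking the $5$-adic obstruction to both lifts of $c$, get injectivity of the true-to-fake comparison from $\dim\mathcal C(K)=1$ and $\kappa_5(\rho)\neq0$, and finish with the point lower bound, $J(K)[2]=0$ and the Kummer sequence; reading the independence of $D_2,\dots,D_5$ modulo $2$ directly off $\dim B=4$ is a valid way to supply that lower bound.
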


\begin{proof}
Every true Selmer class has fake image in $Q=B\oplus\langle c\rangle$.
If its image were $c+b$, choose a point of $H_0$ with fake image $b$.
Subtracting its global Kummer class would give an everywhere locally
Kummer lift of $c$, contradicting the obstruction at $5$.
Thus the true Selmer image lies in $B$.  The comparison is injective,
so the Selmer dimension is at most four.  The independent point lower
bound is four.  Since $J(K)[2]=0$, the Kummer exact sequence
\[
 0\longrightarrow J(K)/2J(K)
 \longrightarrow\operatorname{Sel}^{2}(J/K)
 \longrightarrow\Sha(J/K)[2]\longrightarrow0
\]
proves all three assertions.
\end{proof}

The verified $2$-saturation, equivalently the independence of the four
generators modulo $2$ together with this rank calculation, gives
$[J(K):H_0]$ odd.  The relation \eqref{eq:Erelation} gives
$H_0\subseteq H_1$, so $H_1$ also has odd index.  Its separately
verified $5$-saturation then gives
\[
                         \gcd([J(K):H_1],400)=1.
\]
Multiplication by $400$ is therefore an automorphism of the finite
group $J(K)/H_1$, which proves that
$H_1\to J(K)/400J(K)$ is surjective.  This is precisely the coefficient
coverage required by the finite sieve; saturation at other primes
is not used.

Finally, let a characteristic-zero logarithmic functional vanish on
$H_0$.  Every point of $J(K)$ has a nonzero integer multiple in $H_0$,
and the target of the functional has no integer torsion.  The
functional therefore vanishes on all of $J(K)$.  This proves that the
logarithmic annihilator used in Theorem~\ref{thm:rational-t} is global.

\paragraph{Location of the arithmetic evidence.}
The ancillary component guide \path{PROGRAMS_AND_CERTIFICATES.md}
identifies the ordered $66$ representatives, all restriction columns,
the diagonal and point coordinates, the fixed representative $c$,
and the exact root and local-divisor witnesses used here.  It also
locates the full local-field and precision calculations behind the
inequalities above.  These data establish the computational premises
of the argument; checksums and saved status messages serve only to
identify and reproduce them.

\end{document}